\newtheorem{theorem}{Theorem}
\newtheorem{corollary}{Corollary}
\newtheorem{lemma}{Lemma}
\newtheorem{proposition}{Proposition}
\theoremstyle{definition}
\newtheorem{example}{Example}
\newtheorem{note}{Note}
\newcommand{\R}{\mathbb{R}}
\newcommand{\nbar}{\overline{n}}
\newcommand{\rfg}{\mathcal{R}_n}
\newcommand{\fA}{\mathfrak{A}}
\newcommand{\fB}{\mathfrak{B}}
\newcommand{\spry}{^{\prime}}
\newcommand{\mbar}{\overline{m}}
\begin{document}

\title{Some properties of multivariate measures of concordance}         
 
\author{M. D. Taylor}        

\date{\today}          
 
\maketitle

\begin{abstract}
 We explore the consequences of a set of axioms which extend Scarsini's axioms for bivariate measures of concordance to the multivariate case and exhibit the following results:  (1) A method of extending measures of concordance from the bivariate case to arbitrarily high dimensions.  (2) A formula expressing the measure of concordance of the random vectors $(\pm X_1,\cdots,\pm X_n)$ in terms of the measures of concordance of the ``marginal'' random vectors $(X_{i_1},\cdots,X_{i_k})$.  (3) A method of expressing the measure of concordance of an odd-dimensional copula in terms of the measures of concordance of its even-dimensional marginals.  (4) A family of relations which exist between the measures of concordance of the marginals of a given copula.
\end{abstract}

2000 {\it Mathematics Subject Classification.}  Primary 60E05  Secondary 62E10, 62H05. 

{\it Key words and phrases.} Copula, concordance, measures of association, multivariate measures of association.

\allowdisplaybreaks

\section{Introduction}
Scarsini proposed axioms for a bivariate measure of concordance in \cite{Scarsini84}.  The question of extending such axioms to a multivariate setting is a natural one and was explicitly raised in \cite{Nelsen02}. Adding substance to this question, plausible generalizations of well-known bivariate measures of concordance, such as Spearman's rho and Kendall's tau, were given in \cite{Dolati-Ubeda04}, \cite{Dolati-Flores04}, \cite{Joe90}, \cite{Nelsen02}, and \cite{Ubeda04}.    

As possible answers to the question, axioms for multivariate measures of concordance were proposed in \cite{Dolati-Flores04} and \cite{taylor04a}.  The axiom set of \cite{Dolati-Flores04} is essentially a subset of \cite{taylor04a}; the second set contains two extra axioms and is far more restrictive.  

This present work should be viewed as a continuation of \cite{taylor04a}.  The purpose of that paper was to present its axiom set, to give justifications for the axioms, and to show how particular functions which had been proposed in the literature as multivariate measures of concordance either satisfied or failed to satisfy the axioms.  In the present work, by developing some of their consequences, we seek to show that the axiom set of \cite{taylor04a} leads to a theory which is both rich and interesting.

What is a measure of concordance?  We give a rough and incomplete answer.

Let $(X_1,\cdots,X_n)$ be a random vector where each $X_i$ is a continuous random variable.  Intuitively speaking, we want a measure of concordance to be a function $\kappa$ operating on such continuous random vectors to produce a real number $\kappa(X_1,\cdots,X_n)$ that indicates the tendency of all the $X_i$'s to be simultaneously ``large'' or simultaneously ``small.''  We want the largest value of $\kappa(X_1,\cdots,X_n)$ to be 1 and to occur when each $X_i$ is a.s. a monotone increasing function of every other $X_j$; that is the state of maximum concordance.  If the $X_i$'s are independent, then we want to have $\kappa(X_1,\cdots,X_n)=0$; this is total lack of concordance.  If $\kappa(X_1,\cdots,X_n)$ is negative, we shall say that $(X_1,\cdots,X_n)$ is discordant.

To be more specific, the axioms of \cite{Dolati-Flores04}, \cite{Scarsini84}, and \cite{taylor04a} lead to measures of concordance that operate on the $n$-copula $C$ of $(X_1,\cdots,X_n)$ rather than on the random vector itself.  We assume familiarity with the concept of a copula but touch very briefly on some of its most salient characteristics:

The copula $C$ of $(X_1,\cdots,X_n)$ is the function $C:I^n \rightarrow I$, where $I = [0,1]$, which satisfies 
\[
  C(F_1(x_1),\cdots,F_n(x_n)) = F(x_1,\cdots,x_n)
\]
where $F_i$ is the distribution function of $X_i$ and $F$ is the joint distribution function of $(X_1,\cdots,X_n)$.  Each $n$-copula $C$ is uniquely associated with a probability measure $\mu_C$ on $I^n$ satisfying 
\[
  \mu_C(I^p \times A \times I^q) = \lambda(A) \quad \text{and} \quad 
    \mu_C([0,t_1] \times\cdots\times [0,t_n]) = C(t_1,\cdots,t_n),
\]
where $p+q=n-1$, A is a Borel set of $I$, and $\lambda(A)$ is the 1-dimensional Lebesgue measure of $A$.  If Cop($n$) is the set of $n$-copulas, then two standard elements of Cop($n$) are $M$ and $\Pi$ which are defined by 
\[
  M(t_1,\cdots,t_n) = \min(t_1,\cdots,t_n) \quad \text{and} \quad 
    \Pi(t_1,\cdots,t_n) = t_1 \cdots t_n.
\]
In line with our earlier comments, we shall require our measures of concordance to satisfy 
\[
  \kappa(M) = 1 \quad \text{and} \quad \kappa(\Pi) = 0.
\]
Standard references for copulas are \cite{Nelsenbook} and \cite{schweizsklar83}.

Now in this present work, we devote a considerable part of our time---the next two sections---to developing special notations.  This is largely occasioned by the fact that we deal so much with symmetries of the $n$-dimensional cube $I^n$ acting on copulas and with marginals of copulas, that we therefore need a good means of keeping track of indices under various transformations of copulas.  We also need a special notation for certain expressions involving measures of concordance of marginals of copulas because these expressions and variants of them occur again and again.

Once we have our special notations, in the next-to-last section, we derive interesting properties of copulas:
\begin{itemize}
  \item  Every bivariate measure of concordance in Scarsini's sense can be extended to a multivariate measure of concordance satisfying our axioms.

  \item  A formula is given which takes a copula $C$ subjected to a ``reflection'' of $I^n$ to produce a new copula $\xi^*(C)$ and computes the measure of concordance of the new copula in terms of the measures of concordance of the marginals of $C$.  To restate this idea in terms of random variables, suppose we are given a random vector $(\epsilon_1 X_1,\cdots,\epsilon_n X_n)$ where each $X_i$ is a continuous random variable and each $\epsilon_i = \pm 1$.  Then a formula is developed that expresses the measure of concordance of $(\epsilon_1 X_1,\cdots,\epsilon_n X_n)$ in terms of the measures of concordance of the random vectors $(X_{i_1},\cdots,X_{i_k})$ where $1 \leq k \leq n$ and $i_1 < \cdots < i_k$.

  \item  A numerical relation is shown to exist between measures of concordance of ``complementary'' marginals of a copula $C$.

  \item  The measure of concordance of any odd-dimensional copula $C$ is expressed as a function of the measures of concordance of the even-dimensional marginals of $C$.  

  \item  An ``asymptotic'' result is given, the measure of concordance of $(X_1,\cdots,X_n)$ being calculated as $n \rightarrow \infty$ when $X_i = -X_1$ for a fixed number $s$ of the $X_i$'s and all the other $X_i = X_1$.
\end{itemize}

It can be seen from the development given here that we are left with a number of interesting questions about measures of concordance.  We avail ourselves of the last section to raise a few of those questions.

\section{The space $V(I^n)$ and symmetries of $I^n$}

\subsection{Inequalities and rectangles}
It will be convenient for us to extend the notation for inequalities and intervals to an $n$-dimensional setting.  If $x=(x_1,\cdots,x_n)$ and $y=(y_1,\cdots,y_n)$ are points in $I^n$ (or $\R^n$), then by $x<y$ we mean $x_i<y_i$ for $i=1,\cdots,n$.  Likewise, $x\leq y$ means $x_i\leq y_i$ for $i=1,\cdots,n$, and we attach analogous meanings to $x>y$ and $x \geq y$.  Later, when dealing with random vectors $X=(X_1,\cdots,X_n)$ and $Y=(Y_1,\cdots,Y_n)$, we will feel free to use notations such as $X<Y$, $X \geq Y$, etc. in the sense indicated here.  If $x \leq y$, we shall also make use of extended interval notations to describe $n$-dimensional rectangles.  For example, $[x,y]$ will denote the $n$-dimensional (possibly degenerate) rectangle $[x_1,y_1]\times\cdots\times[x_n,y_n]$, and $[x,y)$ will denote $[x_1,y_1)\times\cdots\times[x_n,y_n)$.  We shall use $[0,x]$ and $[x,1]$ (and other fairly obvious variations on this notation) to mean $[0,x_1]\times\cdots\times[0,x_n]$ and $[x_1,1]\times\cdots\times[x_n,1]$, etc.

\subsection{Symmetries of the unit $n$-cube}
Let $I=[0,1]$.  By $I^n$ we mean, of course, the $n$-fold cartesian product of the unit interval with itself, $I{\times}\cdots{\times}I$.  By a \emph{symmetry of $I^n$} we understand a one-to-one, onto map $\xi:I^n \rightarrow I^n$ of the form 
\[
  \xi(x_1,\cdots,x_n) = (u_1,\cdots,u_n) 
\]
where for each $i$ 
\[
  u_i = 
  \begin{cases}
    x_{k_i} \text{ or} \\
    1-x_{k_i}
  \end{cases}
\]
where $(k_1,\cdots,k_n)$ is a permutation of $(1,\cdots,n)$.
By $\mathcal{S}(I^n)$ we mean the group of such symmetries under the 
operation of composition.

We say that $\xi$ is a \emph{permutation} if for each $i$ we have 
$u_i = x_{k_i}$ and is a \emph{reflection} if for each $i$ we have 
$u_i = x_i$ or $1-x_i$.  The sets of permutations and reflections constitute 
subgroups of $\mathcal{S}(I^n)$ that we label $\mathcal{P}_n$ and 
$\rfg$ respectively.  

If $\tau:I^n \rightarrow I^n$ is the permutation 
$\tau(x_1,\cdots,x_n) = (x_{k_1},\cdots,x_{k_n})$, then it is uniquely 
associated with the permutation 
\[
  (1,\cdots,n) \mapsto (k_1,\cdots,k_n)
\]
of $\{1,2,\cdots,n\}$, and we use the symbol $\tau\spry$ for this second 
permutation as well.  Thus 
\[
  \tau(x_1,\cdots,x_n) = (x_{\tau\spry(1)},\cdots,x_{\tau\spry(n)}).
\]
It is straightforward to show the following: 

\begin{proposition}
  If $\tau$ and $\rho$ are permutations of $I^n$, then 
  \begin{enumerate}[\upshape (1)]
  \item $(\tau\spry)^{-1} = (\tau^{-1})\spry$.

  \item $(\tau\circ\rho)\spry = \rho\spry \circ \tau\spry$.
  \end{enumerate}
\end{proposition}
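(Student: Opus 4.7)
The plan is to prove (2) first by a direct coordinate-wise computation, then to deduce (1) by taking $\rho=\tau^{-1}$ in (2). The conventions displayed just above the proposition say that the $i$-th coordinate of $\tau(x_1,\cdots,x_n)$ is $x_{\tau\spry(i)}$, so the assignment $\tau\mapsto\tau\spry$ is contravariant at the level of indices; the order-reversal in (2) is really a record of this fact.

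For (2), I would fix a point $(x_1,\cdots,x_n)\in I^n$ whose coordinates are pairwise distinct (e.g., $x_j=j/(n+1)$), fix an index $i\in\{1,\cdots,n\}$, and compute the $i$-th coordinate of $(\tau\circ\rho)(x_1,\cdots,x_n)$ in two ways. On the one hand, the definition applied to the composite permutation $\tau\circ\rho$ of $I^n$ gives this coordinate as $x_{(\tau\circ\rho)\spry(i)}$. On the other hand, applying $\rho$ first and then $\tau$, and using the definition twice, gives it as the $\tau\spry(i)$-th coordinate of $\rho(x_1,\cdots,x_n)$, namely $x_{\rho\spry(\tau\spry(i))}$. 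Since the coordinates of the chosen point are distinct, $x_a=x_b$ forces $a=b$, so $(\tau\circ\rho)\spry(i)=\rho\spry(\tau\spry(i))$ for every $i$, which is exactly (2).

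For (1), let $\mathrm{id}$ denote both the identity map on $I^n$ and the identity permutation on $\{1,\cdots,n\}$; directly from the definition one sees $\mathrm{id}\spry=\mathrm{id}$. Applying (2) with $\rho=\tau^{-1}$ gives $\mathrm{id}=(\tau\circ\tau^{-1})\spry=(\tau^{-1})\spry\circ\tau\spry$, and the symmetric computation yields $\tau\spry\circ(\tau^{-1})\spry=\mathrm{id}$. Hence $(\tau^{-1})\spry$ is a two-sided inverse of $\tau\spry$ in the symmetric group, i.e., $(\tau\spry)^{-1}=(\tau^{-1})\spry$.

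There is no real obstacle here; the only thing to watch is the contravariance built into the convention $\tau(x)_i=x_{\tau\spry(i)}$, which is what forces the factor order in (2) to flip. All of the content of the proposition comes down to the single computation in the previous paragraph, carried out carefully enough to keep track of which index permutation is being applied on the inside versus the outside.
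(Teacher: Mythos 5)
Your proof is correct; the paper itself offers no argument (it simply labels the proposition ``straightforward''), and your coordinate-wise computation of $(\tau\circ\rho)(x)$, using a point with distinct coordinates to identify indices and then specializing $\rho=\tau^{-1}$, is exactly the intended routine verification. Nothing further is needed.
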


Because of this, we shall feel free to write $\tau^{\prime -1}$ for $(\tau\spry)^{-1}$ or $(\tau^{-1})\spry$.

\bigskip

We define the \emph{elementary reflections} 
$\sigma_1, \sigma_2, \cdots, \sigma_n$ by 
\[
  \sigma_i(x_1,\cdots,x_n) = (u_1,\cdots,u_n) \text{ where } u_j = 
  \begin{cases}
    1-x_j \text{ if }j=i \\
    x_j \text{ otherwise.}
  \end{cases}
\]
By $\sigma^n$ we mean the reflection $\sigma_1\sigma_2\cdots\sigma_n$; 
that is,
\[
  \sigma^n(x_1,\cdots,x_n) = (1-x_1,\cdots,1-x_n).
\]
If the choice of $n$ is clear, we shall write $\sigma$ for $\sigma^n$. 

It should be noted that $\rfg$ is an abelian group, though the same is not true for either the group of symmetries of $I^n$ or the group of its permutations.  Further, every symmetry $\xi$ of $I^n$ has a unique representation of the form $\sigma_{i_1}\cdots\sigma_{i_k}\tau$ (and another of the form $\tau^{\prime}\sigma_{j_1}\cdots\sigma_{j_k}$) where $\tau$ is a permutation and $i_1<\cdots < i_k$.  Because of this we can define 
\[
  |\xi| = \text{ the \emph{length} of }\xi = k.
\]
Informally speaking, $|\xi|$ is the minimum number of elementary reflections needed to write $\xi$.

\subsection{Extended marginals of functions}
For $n$ a natural number, define $\nbar=\{1,2,\cdots,n\}$.  Now for every $S{\subseteq}\nbar$, define a transformation $x \mapsto x_S$ of an element of $I^n$ to a new element of $I^n$ thus:  If $x=(x_1,\cdots,x_n){\in}I^n$, then 
\[
  x_S = (u_1,\cdots,u_n) \text{ where } u_i = 
  \begin{cases}
    &1 \text{ if } i \in S, \\
    &x_i \text{ if } i \notin S.
  \end{cases}
\]
For instance, if $n=5$ and $S=\{1,4\}$, then $x=(x_1,x_2,x_3,x_4,x_5)$ becomes $x_S=(1,x_2,x_3,1,x_5)$.

For every function $f:I^n \rightarrow \R$ and $S \subseteq \nbar$, we can define a new function    $f_S:I^n \rightarrow \R$ by 
\[
  f_S(x) = f(x_S).
\]
We will use the symbol $1^n$ for the constant map $f(x)=1$.  Notice that $1^{n}_S=1^n$ for all $S \subseteq \nbar$.  It is easily seen that for any $f:I^n \rightarrow \R$ there must be a maximal set $S \subseteq \nbar$ with the property that $f = f_S$.  In some cases this maximal $S$ will be $\emptyset$.  We will call this maximal $S$ the \emph{inactive set} of $f$, and when considering $f(x_1,\cdots,x_n)$ where $(x_1,\cdots,x_n) \in I^n$, we will call $x_i$ an \emph{inactive variable} of $f$ if $i \in S$.  We refer to $\nbar - S$ as the \emph{active set} of $f$ and $x_i$ as an \emph{active variable} of $f$ if $i \notin S$.

\begin{example}
If $f:I^5 \rightarrow \R$ is $f(x_1,x_2,x_3,x_4,x_5) = x_1x_2$, then the inactive set of $f$ is $\{3,4,5\}$ and the active variables of $f$ are $x_1$ and $x_2$.
\end{example}

\begin{note}
  We will occasionally find it convenient to use the notation $f \otimes g$ when $f$ and $g$ are two real-valued functions.  If $f:D_1 \rightarrow \R$ and $g:D_2 \rightarrow \R$, then $f \otimes g:D_1 \times D_2 \rightarrow \R$ is defined by $(f \otimes g)(x,y) = f(x)g(y)$ where $x \in D_1$ and $y \in D_2$.
\end{note}

The following are easily seen to be true:

\begin{proposition} \label{proposition0}
  Suppose that $f,g:I^n \rightarrow \R$, that $S,T \subseteq \nbar$ with $s = \text{card}(S)$, and    that $x \in I^n$.  Then 
  \begin{enumerate}[\upshape (1)]
    \item  $f_S(x) = f_S(x_S)$.
    \item  $(f_S)_T = f_{S \cup T}$.
    \item  \label{prop0-3}  If $S$ is the inactive set of $f$, then there exists a permutation $\tau$ of $I^n$ and a unique function $g:I^{n-s} \rightarrow \R$ such that 
\begin{enumerate}[\upshape (a)]
  \item  $\tau^{\prime -1}$ is order preserving on $\nbar - S$,
  \item  $\tau^{\prime -1}(\nbar - S) = \{1,2,\cdots,n-s\} = \overline{n-s}$,
  \item  $f=(g \otimes 1^s)\circ\tau$,
  \item  all the variables of $g$ are active.
\end{enumerate}
  \end{enumerate}
\end{proposition}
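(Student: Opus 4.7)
Parts (1) and (2) are routine set-theoretic bookkeeping, and I would dispatch them first. For (1), the key observation is that the operation $x\mapsto x_S$ is idempotent: once the coordinates indexed by $S$ have been replaced by $1$, a second application changes nothing, so $(x_S)_S = x_S$ and both sides of (1) equal $f(x_S)$ by definition. For (2), the corresponding identity is $(x_T)_S = x_{S\cup T}$, since a coordinate ends up equal to $1$ under either procedure iff its index lies in $S\cup T$; applying $f$ then yields the claim.

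The substance of the proposition is in (3). I would enumerate $\nbar - S = \{j_1 < j_2 < \cdots < j_{n-s}\}$ and define $\tau^{\prime -1}(j_k) = k$ for $k = 1,\ldots,n-s$, then extend $\tau^{\prime -1}$ arbitrarily to a bijection of $S$ onto $\{n-s+1,\ldots,n\}$. The associated permutation $\tau$ of $I^n$ then satisfies (a) and (b) by construction. Because $S$ is the inactive set of $f$, the value $f(x)$ depends only on the coordinates $(x_{j_1},\ldots,x_{j_{n-s}})$, so I would define $g:I^{n-s}\to\R$ by $g(x_{j_1},\ldots,x_{j_{n-s}}) = f(x_1,\ldots,x_n)$. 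A brief unwrapping using $\tau^{\prime}(k) = j_k$ for $k\le n-s$ shows $(g\otimes 1^s)\circ\tau(x) = g(x_{j_1},\ldots,x_{j_{n-s}}) = f(x)$, giving (c). Uniqueness of $g$ is immediate from $\tau$ being a bijection of $I^n$: if $(g_1\otimes 1^s)\circ\tau = (g_2\otimes 1^s)\circ\tau$, then $g_1\otimes 1^s = g_2\otimes 1^s$, and hence $g_1 = g_2$.

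For (d), I would proceed by contradiction. If the $k$-th variable of $g$ were inactive, then $g$ would not depend on its $k$-th argument, so via (c) the function $f$ would not depend on $x_{\tau^{\prime}(k)} = x_{j_k}$; this means $f = f_{S\cup\{j_k\}}$, which strictly enlarges the inactive set and contradicts the assumed maximality of $S$. Hence every variable of $g$ is active.

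The main hazard in writing this up cleanly is keeping the indexing straight between $\tau$, $\tau^{\prime}$, and $\tau^{\prime -1}$, particularly verifying that the order-preserving requirement in (a) forces $\tau^{\prime}(k) = j_k$ on the initial segment $\overline{n-s}$, which is exactly what makes the proof of (c) go through. Beyond that, the argument is entirely bookkeeping, and part (d) is essentially a restatement of the maximality clause already built into the hypothesis.
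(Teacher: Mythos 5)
Your argument is correct: parts (1) and (2) follow from the idempotence and composition identities $(x_S)_S=x_S$ and $(x_T)_S=x_{S\cup T}$, and your construction of $\tau$ and $g$ for part (3), together with the maximality argument for (d), is exactly the routine verification intended. The paper offers no written proof of this proposition (it is stated as ``easily seen''), and your proposal supplies the standard argument with no gaps.
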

We call the unique $g$ in the last part of this proposition the \emph{proper function} of $f$ and $\tau$ a \emph{proper permutation} of $f$.

\begin{example}
  Suppose that $f:I^5 \rightarrow \R$ is a $5$-copula.  Then the active set of $f$ is $\{1,2,3,4,5\}$ and its inactive set is $\emptyset$.  Now let us take $S=\{1,4\}$ and form $f_S$.  The active set of $f_S$ is now $\{2,3,5\}$ and the inactive set is $S$.  The proper function of $f_S$ is $g(x_2,x_3,x_5) = f(1,x_2,x_3,1,x_5)$ which is a $3$-copula.  If we take $\tau:I^5 \rightarrow I^5$ to be the permutation $\tau(x_1,x_2,x_3,x_4,x_5) = (x_2,x_3,x_5,x_1,x_4)$, then we see that $f_S = (g \otimes 1^2)\circ\tau$, that $\tau^{\prime -1}(\{2,3,5\}) = \{1,2,3\}$, and that $\tau^{\prime -1}$ is increasing on the active set of $f_S$.
\end{example}

Let $V(I^n)$ be the set of continuous $f:I^n \to \R$ having the property that there exists a signed measure $\mu_f$ on $\mathcal{B}(I^n)$, the $\sigma$-algebra of Borel sets of $I^n$, such that $\mu_f([0,x]) = f(x)$.  (Recall that $[0,x]$ denotes an $n$-dimensional rectangle, possibly degenerate, in $I^n$.)  The following is easily seen to be true:

\begin{proposition} 
  \begin{enumerate}[\upshape (1)]

    \item  $V(I^n)$ is a vector space under pointwise addition and multiplication by real scalars.
    \item  $V(I^n)$ contains all the $n$-copulas. 
    \item  $V(I^n)$ is closed under the operation $f \mapsto f_S$.
  \end{enumerate}
\end{proposition}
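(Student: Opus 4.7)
Parts (1) and (2) are essentially bookkeeping. For (1), if $f, g \in V(I^n)$ carry signed measures $\mu_f, \mu_g$ and $a,b \in \R$, then $af + bg$ is continuous and $a\mu_f + b\mu_g$ is a signed measure on $\mathcal{B}(I^n)$ satisfying $(a\mu_f + b\mu_g)([0,x]) = af(x) + bg(x)$, so $af + bg \in V(I^n)$. For (2), the introduction already records that every $n$-copula $C$ is continuous and carries a probability measure $\mu_C$ with $\mu_C([0,x]) = C(x)$, so $C \in V(I^n)$ directly from the definitions.

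For (3) the substance lies in producing the signed measure for $f_S$. Continuity of $f_S$ is immediate because the map $x \mapsto x_S$ is continuous on $I^n$. I first handle the singleton case $S = \{i\}$: marginalize $\mu_f$ in coordinate $i$ to obtain a signed measure $\nu$ on $I^{\nbar \setminus \{i\}}$ defined by $\nu(A) = \mu_f(A')$, where $A'$ is the cylinder obtained from $A$ by inserting $I$ in the $i$-th slot (this is a legitimate signed measure because Jordan-decomposing $\mu_f = \mu_f^+ - \mu_f^-$ reduces the construction to the usual pushforward of positive measures). Setting $\mu_{f_S} = \nu \otimes \delta_0$, with the Dirac mass placed in coordinate $i$, gives a signed measure on $\mathcal{B}(I^n)$, and evaluation on a lower rectangle yields
\[
  \mu_{f_S}([0,x]) = \nu\!\left(\textstyle\prod_{j \neq i}[0,x_j]\right) \cdot \delta_0([0,x_i]) = f(x_{\{i\}}) = f_S(x),
\]
so $f_S \in V(I^n)$.

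For a general $S = \{i_1, \ldots, i_s\} \subseteq \nbar$, iterate: by part (2) of Proposition~\ref{proposition0} we have $f_S = (\cdots((f_{\{i_1\}})_{\{i_2\}})\cdots)_{\{i_s\}}$, and each singleton step stays inside $V(I^n)$ by the previous paragraph. The main (and essentially only) technical point is checking that the marginal-times-Dirac construction genuinely defines a signed measure and that the identity $\mu_{f_S}([0,x]) = f_S(x)$ holds on every lower rectangle (so that $\mu_{f_S}$ is uniquely determined and genuinely witnesses $f_S \in V(I^n)$); once that is verified for $|S| = 1$, both the induction on $|S|$ and the whole proposition follow with no further work.
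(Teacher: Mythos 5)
Your proof is correct, and it matches the paper's intent: the paper states this proposition without proof (``easily seen''), and your marginal-times-Dirac construction $\mu_{f_S}=\nu\otimes\delta_0$ in part (3) is precisely the ``squashing'' of the $\mu_f$-mass of the cylinder onto the face where the $i$-th coordinate is $0$ that the paper describes informally in its $n=2$, $S=\{2\}$ example. The one small point you use implicitly and could state is that $\delta_0([0,x_i])=1$ for every $x_i\in[0,1]$, including $x_i=0$, which is exactly why the identity $\mu_{f_S}([0,x])=f_S(x)$ holds on all lower rectangles.
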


If $f:I^n \rightarrow \R$ and $s=\text{card\,}(S)$, then we call $f_S$ an \emph{extended $(n-s)$-marginal of $f$}.  This notion is inspired by that of the marginals of a probabilistic distribution function.  However it differs from that notion in that a marginal of a probabilistic distribution usually operates on fewer variables than the original distribution function.  Here, if $f_S$ is an extended marginal of $f \in V(I^n)$, then $f$ and $f_S$ both have $I^n$ as their domain. 

For example, if $f$ is an element of $V(I^5)$ and $S=\{1,3\}$, then 
\[
  f_S(t_1,t_2,t_3,t_4,t_5) = f(1,t_2,1,t_4,t_5).
\]
For this particular marginal we also feel free to write 
\[
  f_S = f_{13},
\]
and, more generally, if $S=\{i_1,\cdots,i_s\}$ where $i_1<\cdots<i_s$, we write 
\[
  f_S = f_{i_1{\cdots}i_s}.
\]
 We will often have occasion to write $f_i$ for $f_{\{i\}}$.  

\begin{note}
The spirit of this notation is opposite to that in some other work for marginals of probability distribution functions.  For example, \cite{Joe97} and \cite{Nelsenbook}.  Where we used $f_{13}$ for a particular marginal of $f$ just above, other works would denote the same marginal $f_{245}$.  Hopefully this will spare the reader some confusion. 
\end{note}

Every $f{\in}V(I^n)$ induces a signed measure $\mu_f$ on $(I^n,\mathcal{B}(I^n))$, where $\mathcal{B}(I^n)$ is the $\sigma$-algebra of Borel sets of $I^n$, via the equation 
\[
  \mu_f([0,x]) = f(x).
\]
(Recall here that $[0,x]$ denotes an $n$-dimensional rectangle, possibly degenerate, in $I^n$.)

It may be enlightening to see how $\mu_f$ and $\mu_{f_S}$ compare.

\begin{example}
  Let $n=2$ and $S=\{2\}$.  For $x=(x_1,x_2)$ in $I^2$ we have 
  \[
    \mu_{f_S}([0,x]) = \mu_{f_S}([0,x_1]\times[0,x_2]) = f(x_1,1) = \mu_f([0,x_1]\times[0,1]).
  \]
  Notice that if we let $x_2{\rightarrow}0^{+}$, we obtain 
  \[
     \mu_{f_S}([0,x_1]\times\{0\}) = f(x_1,1) = \mu_f([0,x_1]\times[0,1]).
  \]
  That is, all the signed $\mu_{f_S}$-mass of $[0,x]$ is located in the horizontal line segment $[0,x_1]\times\{0\}$, and that mass results from ``squashing'' the $\mu_f$-mass of $[0,x_1]\times{I}$ onto the line segment $[0,x_1]\times\{0\}$.
\end{example}

It is easily seen that more generally we may say the following:

\begin{proposition}  \label{proposition1}
  Suppose $f \in V(I^n)$ and $S \subseteq \nbar$. Let $E_1,\cdots,E_n,F_1,\cdots,F_n$ be (possibly degenerate) intervals in $I$ of the form $[0,u]$.  Then 
\[
    \mu_{f_S}(F_1 \times \cdots \times F_n) = \mu_f(E_1 \times \cdots \times E_n)
\]
provided that 
\[
  E_i = 
  \begin{cases}
    &F_i \text{ when } i \notin S, \\
    &I \text{ when } i \in S.
  \end{cases}
\]
\end{proposition}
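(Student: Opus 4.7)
The plan is to unpack the definitions: both sides should reduce to the value of $f$ at the point $u_S$, where $u=(u_1,\ldots,u_n)$ records the right endpoints of the $F_i$'s. Writing each $F_i=[0,u_i]$ (possibly with $u_i=0$), the rectangle $F_1\times\cdots\times F_n$ is just $[0,u]$, and the defining property of the signed measure $\mu_{f_S}$ gives
\[
  \mu_{f_S}(F_1\times\cdots\times F_n)=\mu_{f_S}([0,u])=f_S(u)=f(u_S).
\]

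On the right-hand side, the prescription $E_i=F_i$ for $i\notin S$ and $E_i=I$ for $i\in S$ produces the rectangle $[0,v]$, where $v_i=u_i$ for $i\notin S$ and $v_i=1$ for $i\in S$. But this vector $v$ is precisely $u_S$, by the very definition of the transformation $x\mapsto x_S$ given earlier, and so
\[
  \mu_f(E_1\times\cdots\times E_n)=\mu_f([0,u_S])=f(u_S),
\]
which matches the value obtained above.

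The only delicate point concerns degenerate rectangles, i.e.\ the case where some $u_i=0$. I would argue that this causes no trouble: the identity $\mu_{f_S}([0,u])=f_S(u)$ is the defining property of $\mu_{f_S}$ on every lower-left rectangle, and $u_S$ remains well-defined regardless of whether any $u_i$ vanishes (coordinates in $S$ are forced to $1$ unconditionally). Thus I anticipate no real obstacle; the proposition is essentially a bookkeeping identity, and the $n=2$ example immediately preceding the statement already exhibits the full mechanism in miniature.
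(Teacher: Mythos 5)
Your proof is correct and is exactly the argument the paper has in mind: the paper states this as ``easily seen'' after the $n=2$ example, and your computation (both sides reduce to $f(u_S)$ via the defining property $\mu_g([0,x])=g(x)$, valid also for degenerate rectangles) is the straightforward generalization of that example. Nothing is missing.
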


\subsection{The action of symmetries on functions}
For $\xi$ a symmetry of $I^n$ and $f{\in}V(I^n)$, we define a map $\xi^* : V(I^n)\rightarrow V(I^n)$ by 
\[
  [\xi^*(f)](x) = \mu_f(\xi([0,x])).
\]

In addition to considering $f_S$ where $S \subseteq \nbar$, we shall find it convenient to be able to refer to $\sigma_S^*(f)$.  What we mean by that is this:  Suppose $S = \{i_1,i_2,\cdots,i_k\}      \subseteq \nbar$ where the $i_j$'s are distinct.  Then 
\[
  \sigma_S^*(f) = (\sigma_{i_1}\sigma_{i_2}\cdots\sigma_{i_k})^*(f).
\]

\begin{proposition} \label{proposition2}
  Let $\xi$, $\eta$,and $\tau$ be symmetries of $I^n$ and $f$ and $g$ be elements of $V(I^n)$.  Let $S$ and $T$ be subsets of $\nbar$.  Then the following hold:
\begin{enumerate}[\upshape (1)]
  \item  \label{prop2-1} $(\xi\circ\eta)^*(f)=\eta^*(\xi^*(f))$; that is, $(\xi\circ\eta)^* = \eta^*\circ\xi^*$.

  \item  \label{prop2-2}  $\xi^* : V(I^n) \rightarrow V(I^n)$ is a linear transformation.

  \item \label{prop2-3}  $\tau^*(f)= f \circ\tau$ if $\tau$ is a permutation of $I^n$.

  \item \label{prop2-4}  $(\tau^*(f))_S = \tau^*(f_{\tau^{-1}(S)})$ if $\tau$ is a permutation        of $I^n$.

  \item  \label{prop2-5}  $f_i\circ\sigma_i = f_i$.

  \item  \label{prop2-6}  $(\sigma_i)^*(f) = (f_i - f)\circ\sigma_i$.

  \item  \label{prop2-7}  $((\sigma_S)^*(f))_T = ((\sigma_{S-T}^*(f))_T$.

  \item  \label{prop2-8}  $(\sigma_S)^*(f_T) = 
     \begin{cases}
       &0 \text{ if } S \cap T \ne \emptyset, \\
       & ((\sigma_S)^*(f))_T \text{ if } S \cap T = \emptyset.
     \end{cases}$

  \item  \label{prop2-9}  $\underset{\xi \in \rfg}{\sum}((\xi^*(f))\circ\xi)(x) = \mu_f(I^n)$ if $x \in (0,1]^n$.
\end{enumerate}
\end{proposition}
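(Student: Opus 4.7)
The overall plan is to reduce each of the nine parts to an explicit signed-measure computation on rectangles, exploiting the definition $[\xi^*(f)](x) = \mu_f(\xi([0,x]))$ together with two transparent facts: $\tau([0,x]) = [0,\tau(x)]$ for a permutation $\tau$, and $\sigma_i([0,x])$ differs from $[0,x]$ only in its $i$-th factor, which becomes $[1-x_i, 1]$. Almost everything follows from these observations plus linearity of signed measures.

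Parts (1)--(3) are essentially immediate. For (1), I would first verify from definitions that $\mu_{\xi^*(f)}(A) = \mu_f(\xi(A))$ on rectangles $[0,x]$ and then extend to Borel sets by a standard monotone-class argument; (2) is the equality $\mu_{af+bg} = a\mu_f + b\mu_g$; and (3) is the identity $\tau([0,x]) = [0, \tau(x)]$. Parts (4)--(7) are indexing bookkeeping. For (4), I would combine (3) with the identity $\tau(x_S) = (\tau(x))_{\tau^{-1}(S)}$, which drops out of the definitions of $\tau(y)_i = y_{\tau'(i)}$ and the $S$-substitution. Part (5) is immediate because $(x_{\{i\}})_i = 1$ is a fixed point of $\sigma_i$. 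For (6), I would write the $i$-th side of $\sigma_i([0,x])$ as the difference $[0,1] \setminus [0, 1-x_i)$, identify the two resulting pieces as $f_i(x)$ and $(f\circ\sigma_i)(x)$, and then factor $\sigma_i$ out using (5). For (7), the point is that for $j \in S \cap T$ the $j$-th side of $[0, x_T]$ is already the whole interval $I$, which $\sigma_j$ fixes, so those $\sigma_j$ factors can be deleted from $\sigma_S$ without changing anything.

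The main obstacle is (8). For the case $S \cap T \ne \emptyset$, I would single out $i \in S \cap T$, apply (1) and (6), and use Proposition \ref{proposition0}(2) to get $(\sigma_i)^*(f_T) = ((f_T)_i - f_T)\circ\sigma_i = (f_{T \cup \{i\}} - f_T)\circ\sigma_i = 0$, after which composing with $(\sigma_{S \setminus \{i\}})^*$ still gives $0$. The case $S \cap T = \emptyset$ I plan to handle by induction on $|S|$. The base $|S|=1$ is checked by unwinding both sides using (5), (6), and the easy extension $f_U \circ \sigma_i = f_U$ whenever $i \in U$ (proved just as (5) is); the inductive step combines (1) with (7). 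I expect this part to be the most delicate because one has to keep track of how the $T$-substitution, the individual reflections, and the compositions interact coordinate by coordinate, and the two subcases have to be woven together cleanly.

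For (9), the geometric observation is that if $\xi \in \rfg$ is the product of $\sigma_j$ for $j$ in some subset $S_\xi \subseteq \nbar$, then $\xi([0, \xi(x)])$ is the sub-rectangle of $I^n$ whose $j$-th side is $[x_j, 1]$ when $j \in S_\xi$ and $[0, x_j]$ when $j \notin S_\xi$, so that $((\xi^*(f))\circ\xi)(x) = \mu_f(\xi([0, \xi(x)]))$ is precisely the $\mu_f$-measure of this sub-rectangle. As $\xi$ ranges over $\rfg$, these $2^n$ rectangles cover $I^n$ and overlap only on the coordinate hyperplanes $\{y : y_j = x_j\}$. The hypothesis $x \in (0,1]^n$ together with the continuity of $f$ (built into the definition of $V(I^n)$) forces $\mu_f$ to vanish on each such hyperplane, so the measures of the $2^n$ pieces sum exactly to $\mu_f(I^n)$.
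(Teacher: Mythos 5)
Your proposal is correct, and for parts (1)--(7) and (9) it is essentially the paper's own argument: the same reduction of everything to $\mu_f$ evaluated on images of rectangles, the same splitting of the $i$-th side in (6), the same observation in (7) that $\sigma_j$ fixes a factor that is already all of $I$, and the same decomposition in (9) of $I^n$ into the $2^n$ rectangles $\xi([0,\xi(x)])$ meeting only in the hyperplanes $\{y_j=x_j\}$, which carry no $\mu_f$-mass because $f$ is continuous and $x_j>0$. The one genuine divergence is part (8) in the case $S\cap T=\emptyset$: the paper settles the singleton case by a direct measure computation, comparing $\mu_{f_T}$ with $\mu_f$ via Proposition \ref{proposition1}, and then says the general case ``can be seen'' from that instance, whereas you derive the singleton case algebraically from the formula in (6) (together with the commutation of the substitution $x\mapsto x_T$ with $\sigma_i$ when $i\notin T$, and the extension of (5)) and then make the passage to general $S$ an explicit induction using (1) and linearity. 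Your route spells out the step the paper leaves implicit, at the cost of re-deriving by formula manipulation what the measure picture gives in one line; note also that (7) is not actually needed in your inductive step, since the inductive hypothesis may simply be applied to the element $\sigma_i^*(f)$ of $V(I^n)$. One small correction to (5): the value $1$ in the $i$-th slot is not a fixed point of $\sigma_i$ (it is sent to $0$); the correct reason is that $\sigma_i$ alters only the $i$-th coordinate, which the substitution $x\mapsto x_{\{i\}}$ then overwrites with $1$, so that $(\sigma_i(x))_{\{i\}}=x_{\{i\}}$ and hence $f_i\circ\sigma_i=f_i$.
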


\begin{proof}
  \eqref{prop2-1}  We know from the definition of $\xi^*(f)$ that $\mu_{\xi^*(f)}([0,x]) =    
  \mu_f(\xi([0,x]))$.  Since sets of the form $[0,x]$ generate $\mathcal{B}(I^n)$, it follows that  $\mu_{\xi^*(f)}(E) = \mu_f(\xi(E))$ for every Borel set $E$ of $I^n$.  Then 
  \[
    [\eta^*(\xi^*(f))](x) = \mu_{\xi^*(f)}(\eta([0,x])) = \mu_f(\xi\eta([0,x])) = [(\xi\eta)^*(f)](x).
  \]

  \medskip

 \eqref{prop2-2}  We show first that $\xi$ distributes over sums.  For $x \in I^n$ we have 
  \begin{align*}
    \mu_{f+g}([0,x]) &= (f+g)(x) \\
    =& f(x) + g(x) \\
    =& \mu_f([0,x]) + \mu_g([0,x]).
  \end{align*}
  From this it follows that $\mu_{f+g} = \mu_f + \mu_g$.  Then 
  \begin{align*}
    \xi^*(f+g)(x) &= \mu_{f+g}(\xi([0,x])) \\
    =&  \mu_f(\xi([0,x])) + \mu_g(\xi([0,x])) \\
    =& \xi^*(f)(x) + \xi^*(g)(x).
  \end{align*}

Next we show that multiplication by a scalar $a$ commutes with $\xi$.  Note first that 
\[
  \mu_{af}([0,x]) = (af)(x) = a(f(x)) = a \mu_f([0.x]),
\]
so that $\mu_{af} = a \mu_f$.  Then 
\[
  \xi^*(af)(x) = \mu_{af}(\xi([0,x])) = a \mu_f(\xi([0,x])) = a \xi^*(f)(x).
\]

  \medskip

 \eqref{prop2-3}  If $\tau$ is a permutation of $I^n$ and $x \in I^n$, then 
  \[
    \tau^*(f)(x) = \mu_f(\tau([0,x])) = \mu_f([0,\tau(x)]) = f(\tau(x)).
  \]

  \medskip

  \eqref{prop2-4}  It is easily checked that for a permutation $\tau$ of $I^n$ and $x \in I^n$ one has $\tau(x_S) = (\tau(x))_{\tau^{\prime -1}(S)}$.  It then follows that 
  \begin{gather*}
    (\tau^*(f))_S(x) = \tau^*(f)(x_S) = f(\tau(x_S)) = f\left( (\tau(x))_{\tau^{\prime -1}(S)} \right) \\
    = ((f_{\tau^{\prime -1}(S)})\circ \tau)(x) = \tau^*(f_{\tau^{\prime -1}(S)})(x).
  \end{gather*}

  \medskip

  \eqref{prop2-5}  The general argument is easily seen from the case where $i=1$.  Let    $x=(x_1,\cdots,x_n) \in I^n$.  Then 
  \begin{align*}
    f_1\circ\sigma_1(x) &= f((\sigma_1(x))_{\{1\}}) \\
    =&  f((1-x_1,x_2,\cdots,x_n)_{\{1\}}) \\
    =&  f(1,x_2,\cdots,x_n) \\
    =&  f(x_{\{1\}}) \\
    =&  f_1(x).
  \end{align*}

  \medskip

  \eqref{prop2-6}  The general argument is easily seen from the case $i=1$.  Let $x =     (x_1,\cdots,x_n) \in I^n$.  Then 
  \begin{align*}
    \sigma_1^*(f)(x) &= \mu_f(\sigma_1([0,x])) \\
    =& \mu_f([1-x_1,1]\times[0,x_2]\times\cdots\times[0,x_n]) \\
    =& \mu_f(I\times[0,x_2]\times\cdots\times[0,x_n]) - 
      \mu_f([0,1-x_1]\times[0,x_2]\times\cdots\times[0,x_n]) \\
    =& f(1,x_2,\cdots,x_n) - f(1-x_1,x_2,\cdots,x_n) \\
    =& (f_1 - f)\circ\sigma_1(x)
  \end{align*}
  where we have made use of part \eqref{prop2-5} of this proposition in the last step.

  \medskip

  \eqref{prop2-7}  The general argument can be seen from the particular case $S = \{1,2\}$ and $T = \{1\}$.  For $x=(x_1,\cdots,x_n) \in I^n$ we have 
  \begin{align*}
    (\sigma_{\{1,2\}}^*(f))_1(x) &= \sigma_1^*(\sigma_2^*(f))(1,x_2,\cdots,x_n) \\
    =& \mu_{\sigma_2^*(f)}(\sigma_1([0,1]\times[0,x_2]\times\cdots\times[0,x_n])) \\
    =& \mu_{\sigma_2^*(f)}([0,1]\times[0,x_2]\times\cdots\times[0,x_n]) \\
    =& (\sigma_2^*(f))_1(x).
  \end{align*}
  Thus $(\sigma_{\{1,2\}}^*(f))_1 = (\sigma_2^*(f))_1$.

  \medskip

  \eqref{prop2-8}  For the case $S \cap T \ne \emptyset$, it suffices to consider $S = T = \{1\}$.  Then by part \eqref{prop2-6} we have 
\[
  \sigma_1^*(f_1) = ((f_1)_1 - f_1)\circ\sigma_1 = (f_1 - f_1)\circ\sigma_1 = 0.
\]

The case where $S \cap T = \emptyset$ can be seen from the particular instance $i=1$ and  $j=2$.  Let $x=(x_1,\cdots,x_n) \in I^n$.  Then   
  \begin{align*}
    (\sigma_1^*(f))_2(x) &= \sigma_1^*(f)(x_{\{2\}}) \\
    =& \mu_f(\sigma_1([0,x_1]\times I \times [0,x_3]\times\cdots\times[0,x_n])) \\
    =& \mu_f([1-x_1,1]\times I \times [0,x_3]\times\cdots\times[0,x_n]) \\
    =& \mu_{f_2}(\sigma_1([0,x])) \text{ (by Proposition \ref{proposition1})} \\
    =& \sigma_1^*(f_2)(x).
  \end{align*}

  \medskip

  \eqref{prop2-9}  Choose $x = (x_1,\cdots,x_n) \in (0,1]^n$ and set 
  \[
    E_i = \{ y \in I^n \, : \, y=(y_1,\cdots,y_n) \text{ and } y_i = x_i\} 
      \text{ and } E = \cup_{i=1}^{n}E_i.
  \]
  By the continuity of $f$ and because $x_i > 0$, we see that $\mu_f(E_i)=0$ and hence $\mu_f(E)=0$.  

  Choose $y = (y_1,\cdots,y_n) \in I^n$ such that $y_i \ne x_i$ for all $i$.  (That is, $y \notin 
  E$.)  There exists a unique reflection $\xi$ of $I^n$ such that $\xi(y)<\xi(x)$.

  (Example:  Suppose that $n=4$ so that $x=(x_1,x_2,x_3,x_4)$, and suppose that we choose           $y=(y_1,y_2,y_3,y_4)$ such that 
  \[
    y_1 < x_1, \; y_2 > x_2, \; y_3 > x_3, \; y_4 < x_4.
  \]
  This amounts to 
  \[
    y_1 < x_1, \; 1-y_2 < 1-x_2, 1-y_3 < 1-x_3, \; y_4 < x_4.
  \]
  Clearly the unique reflection $\xi$ such that $\xi(y) < \xi(x)$ is $\xi = \sigma_2\sigma_3$.)

  For this unique $\xi$ such that $\xi(y)<\xi(x)$, we have $\xi(y) \in [0,\xi(x)]$, that is,        $y \in \xi([0,\xi(x)])$.  (Recall here that $\xi^{-1}=\xi$ since $\xi$ is a reflection.)          It follows from the argument we have just given that 
  \begin{equation} \label{decomposition}
    I^n - E \subseteq \underset{\xi \in \rfg}{\cup} \xi([0,\xi(x)]).
  \end{equation}

  Now suppose $\xi$ and $\eta$ are two reflections of $I^n$ such that $\xi \ne \eta$.  There must   be some $i$ such that if $\xi(u_1,\cdots,u_n) = (v_1,\cdots,v_n)$ and $\eta(u_1,\cdots,u_n) =     (w_1,\cdots,w_n)$, then $v_i=u_i$ and $w_i=1-u_i$ (or vice versa).  Now choose $z \in I^n$ such   that 
  \[
    z = (z_1,\cdots,z_n) \in \xi([0,\xi(x)]) \cap \eta([0,\eta(x)]).
  \]
  It is easily seen that $z_i=x_i$ so that $z \in E$.  We have thus shown that for $\xi \ne \eta$,  we have 
  \[
    \xi([0,\xi(x)]) \cap \eta([0,\eta(x)]) \subseteq E.
  \]

  We now know that sets of the form $\xi([0,\xi(x)])$, as $\xi$ ranges over all the different        elements of $\rfg$, do not overlap except in $E$.  It then follows from (\ref{decomposition})     that 
  \begin{align*}
    \mu_f(I^n) =& \mu_f(I^n - E) \\
    =& \underset{\xi \in \rfg}{\sum} \mu_f(\xi([0,\xi(x)])) \\
    =& \underset{\xi \in \rfg}{\sum} \xi^*(f)(\xi(x)).
  \end{align*}
\end{proof}

\section{Measures of concordance and copulas revisited}

\subsection{Axioms for a measure of concordance}
We recall from \cite{taylor04a} that to say $A \prec_{\mathcal{C}}B$, where $A$ and $B$ are two $n$-copulas, means that $A \leq B$ and $\sigma^*(A) \leq \sigma^*(B)$ where $\sigma = \sigma^n = \sigma_1\sigma_2\cdots\sigma_n$.

We next recall from \cite{taylor04a} that by a \emph{measure of concordance} $\kappa = (\{\kappa_n\},\{r_n\})$ we mean a sequence of maps $\kappa_n:\text{Cop($n$)} \rightarrow \R$ and a sequence of numbers $\{r_n\}$, where $n \geq 2$, such that if $A,B,C,$ and $C_m$ are $n$-copulas, then the following hold:
\newcounter{axiom}
\begin{list}{\bfseries A\arabic{axiom}.}{\usecounter{axiom}}
  \item  \textbf{(Normalization)} \quad
    $\kappa_n(M)=1$ and $\kappa_n(\Pi)=0$.
  \item  \textbf{(Monotonicity)} \quad
    If $A \prec_{\mathcal{C}}B$, then $\kappa_n(A) \leq 
    \kappa_n(B)$.
  \item  \textbf{(Continuity)} \quad
    If $C_m \rightarrow C$ uniformly, then 
    $\kappa_n(C_m)\rightarrow \kappa_n(C)$ as $m\rightarrow\infty$.
  \item  \textbf{(Permutation Invariance)} \quad
    $\kappa_n(\tau^*(C)) = \kappa_n(C)$ whenever $\tau$ is a permutation.
  \item  \textbf{(Duality)} \quad
    $\kappa_n(\sigma^*(C))=\kappa_n(C)$.
  \item  \textbf{(Reflection Symmetry Property; RSP)} \;
    $\underset{\rho \in \rfg}{\sum}\kappa_n(\rho^*(C))=0$ where it should be recalled that $\rfg$ is the group of reflections of $I^n$.
  \item  \textbf{(Transition Property; TP)} \quad
    $r_n \kappa_n(C) = \kappa_{n+1}(E) + \kappa_{n+1}(\sigma_1^*(E))$ 
    whenever $E$ is an $(n+1)$-copula such that 
    $C(x_1,\cdots,x_n)= E(1,x_1,\cdots,x_n)$.
\end{list}

\begin{example}
  We give two examples from \cite{Nelsen02} of measures of concordance.  It is shown for both of them in \cite{taylor04a} that they satisfy the axioms.

First is an $n$-dimensional generalization of Spearman's rho:
\begin{equation*} \label{Nelsenrho}
  \rho_n(C) = \alpha_n \left(\int_{I^n}C \, d\Pi + \int_{I^n}\Pi \, dC 
    - \frac{1}{2^{n-1}}\right)
\end{equation*}
where the value of $\alpha_n$ is chosen to satisfy $\rho_n(M) = 1$.  From \cite{taylor04a} we have
 \[
    r_n = 2{\,}\left(\frac{n+2}{n+1}\right){\,}\left(\frac{2^n-(n+1)}{2^{n+1}-(n+2)}\right).
 \]

Second is an $n$-dimensional generalization of Kendall's tau:
\begin{equation*} \label{Nelsentau}
  \tau_n(C) = \alpha_n \left(\int_{I^n}C \, dC - \frac{1}{2^n}\right)
\end{equation*}
where again the value of $\alpha_n$ is chosen so that $\tau_n(M) = 1$.  Again referring to \cite{taylor04a}, we have 
 \[
    r_n = 2 \, \left(\frac{2^{n-1}-1}{2^n-1}\right).
 \]
\end{example}

\subsection{Subsets of $\nbar$}
We will find it useful to introduce a notation which singles out subsets of $\nbar$ of a fixed size.  
  
If $S$ is a subset of $\nbar$, then either $S=\emptyset$ or 
$S=\{i_1,\cdots,i_s\}$ where $i_1<\cdots<i_s$.  For 
$0 \leq k \leq s$ where $s=$card\,($S$), then we set 
\[
  S(k) = \{T{\,}:{\,}T{\subseteq}S \text{ and card\,}(T)=k \}.
\]
It follows that $S(0)=\{\emptyset\}$, that $S(s)=\{S\}$ when
$s=$card\,($S$), and that $S(m) = \emptyset$ when $m > \text{card\,}(S)$.

\begin{example}
	If $R = \bar{4} = \{1,2,3,4\}$, we have
\[
	R(3) = \{ \{1,2,3\}, \, \{1,2,4\}, \, \{1,3,4\}, \, \{2,3,4\}\}.
\]
\end{example}

The following counting results will be useful later.  In this lemma and later in this paper, when we use the notation $R+T$, this indicates $R \cup T$ with the understanding that $R$ and $T$ are disjoint.

\begin{lemma} \label{countinglemma}
  We take $n,s$ to be natural numbers and $p,q,r$ to be nonnegative integers.  Let $S$ be a subset of $\nbar$ such that $s = \text{card}(S)$, and suppose that $\{x_P\}$ is a set of real numbers indexed by $\{P : P \in S(p)\}$.  Then the following hold:
\begin{enumerate}[\upshape (1)]
  \item \label{counting.1}
  If $p \leq r \leq s$, then
  \[
    \underset{R \in S(r)}{\sum} \quad  \underset{P \in R(p)}{\sum} x_P = 
    \binom{s-p}{r-p} \underset{P \in S(p)}{\sum} x_P = 
    \binom{s-p}{s-r} \underset{P \in S(p)}{\sum} x_P.
  \]

  \item \label{counting.2}
  If $q+r \leq s$, then
  \[
    \underset{R \in S(r)}{\sum} \quad \underset{T \in (S-R)(q)}{\sum} x_{R+T} = 
    \binom{q+r}{r} \underset{P \in S(q+r)}{\sum} x_P = 
    \binom{q+r}{q} \underset{P \in S(q+r)}{\sum} x_P.
  \]
\end{enumerate}
\end{lemma}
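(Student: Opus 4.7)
The plan is to prove both parts by reindexing the double sums as sums over $S(p)$ (respectively $S(q+r)$) weighted by the number of times each index appears, i.e.\ a standard double-counting argument. The two stated forms of each binomial coefficient are trivially equal via $\binom{s-p}{r-p}=\binom{s-p}{s-r}$ and $\binom{q+r}{r}=\binom{q+r}{q}$, so it suffices to verify just one of them in each case.

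For part \eqref{counting.1}, I would swap the order of summation and write
\[
  \sum_{R \in S(r)}\sum_{P \in R(p)} x_P \;=\; \sum_{P \in S(p)} \bigl|\{R \in S(r) : R \supseteq P\}\bigr|\cdot x_P.
\]
Given a fixed $P \in S(p)$, an $R \in S(r)$ containing $P$ is obtained by adjoining an arbitrary $(r-p)$-element subset of $S - P$; since $|S-P| = s-p$, the number of such $R$ is $\binom{s-p}{r-p}$, which pulls out of the sum and gives the claim.

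For part \eqref{counting.2}, observe that as $R$ ranges over $S(r)$ and $T$ ranges over $(S-R)(q)$, the union $P := R+T$ ranges over $S(q+r)$, with each such $P$ arising from exactly those pairs $(R,T)$ where $R$ is an $r$-element subset of $P$ and $T = P - R$. Hence
\[
  \sum_{R \in S(r)}\sum_{T \in (S-R)(q)} x_{R+T} \;=\; \sum_{P \in S(q+r)} \bigl|\{R \in P(r)\}\bigr|\cdot x_P \;=\; \binom{q+r}{r}\sum_{P \in S(q+r)} x_P,
\]
which is the desired identity. There is no real obstacle here; the only thing to be careful about is the bookkeeping that each $P \in S(q+r)$ has a well-defined decomposition $P = R + T$ for each choice of $R \in P(r)$, which is immediate from the disjointness conventions spelled out just before the lemma.
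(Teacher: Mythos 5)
Your argument is correct and is essentially the same double-counting argument the paper gives: in both parts you reindex the double sum over $S(p)$ (respectively $S(q+r)$) and count, for fixed $P$, the number of admissible $R$ (or pairs $(R,T)$), obtaining $\binom{s-p}{r-p}$ and $\binom{q+r}{r}$ exactly as in the paper's proof.
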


\begin{proof}
  \eqref{counting.1} \quad Let $A =  \underset{R \in S(r)}{\sum} \;  \underset{P \in R(p)}{\sum} x_P$.  Every term of $A$ has the property that $P \in S(p)$.  Further, every $x_P$ such that $P \in S(p)$ must appear in $A$.  If we consider a given $P \in S(p)$, how many times can $x_P$ appear in $A$?  This amounts to asking, how many ways can we choose $R \in S(r)$ such that $P \in R(p)$, or, equivalently, how many ways can we choose $R-P \subseteq S-P$ such that $\text{card}(R-P) = r-p$.  The answer is $\binom{s-p}{r-p} = \binom{s-p}{s-r}$.

\bigskip

  \eqref{counting.2} \quad  Let $B = \underset{R \in S(r)}{\sum} \; \underset{T \in (S-R)(q)}{\sum} x_{R+T}$.  Every term of $B$ has the form $x_P$ where $P \in S(q+r)$.  Further, every $x_P$ such that $P \in S(q+r)$ must appear in $B$.  If we ask for a given $P$ how often $x_P$ appears in $B$, this amounts to asking how many ways can we choose $R$ and $T$ so that $R+T = P$ and $\text{card}(R) = r$.  The answer is $\binom{q+r}{r} = \binom{q+r}{q}$.
\end{proof}

\subsection{Proper copulas and extended marginals of copulas}
It is occasionally convenient to have $0$- and $1$-copulas, so we introduce them here.  We define the unique $1$-copula $J:I \rightarrow I$ to be the identity map, $J(t)=t$, and the unique $0$-copula to be the constant $1$.  Notice that these are natural marginals of standard copulas.  

Let $\kappa = (\{\kappa_n\},\{r_n\})$ be a measure of concordance as defined in \cite{taylor04a}.  If $C$ is an $n$-copula and    $C_S$ is an extended marginal of $C$ with $s = \text{card\,}(S)$, then we know from part \eqref{prop0-3} of Proposition \ref{proposition0} that $C_S = (A \otimes 1^s)\circ\tau$ for some permutation   $\tau$ of $I^n$ and that $A$ must be an $(n-s)$-copula which is uniquely determined provided $\tau^{\prime -1}$ is order preserving on the set $\nbar - S$.  We call $A$ the \emph{proper copula} of $C_S$.  In such circumstance we shall feel free to write  $\kappa_{n-s}(C_S)$ and shall mean $\kappa_{n-s}(A)$ by this symbol.

Notice in the definition of measure of concordance that the index $n$ runs from $2$ to $\infty$ for the two sequences $\{\kappa_n\}$ and $\{r_n\}$.  We may, if we wish, extend the index to $n=0,1$.  We simply set $\kappa_0 , \kappa_1 = 0$ and observe that all the axioms for a measure of concordance remain trivially true except possibly the first axiom, Normalization.  The reason Normalization fails is that there is only one 1-copula, $J$, and there is only a single 0-copula, $1$, and these should each be regarded as versions of $\Pi$; there is nothing in dimensions 0 and 1 that corresponds to the copula $M$.  Of course one may take the view that the Normalization axiom has the structure of an if-then statement and is vacuously satisfied in those dimensions.  As for $r_0$ and $r_1$, it does not seem to matter what values we assign them since they generally wind up multiplied by $\kappa_0$ or $\kappa_1$.

We now describe the associated proper copulas in more detail.

\begin{proposition} \label{propcop}
  Let $C$ be an $n$-copula, $S$ be a subset of $\nbar$ with $s = \text{card} (S)$, and $A$ the proper $(n-s)$-copula associated with $C_S$.  Further, let $\tau$ be the permutation of $I^n$ connecting $C_S$ and $A$ as described in part \eqref{prop0-3} of Proposition \ref{proposition0} so that $C_S = (A \otimes 1^s)\circ\tau$.  Then the following hold:
\begin{enumerate}[\upshape (1)]
  \item  \label{propcop.1}  
Let $\zeta$ be a permutation of $I^n$.  Then the proper $(n-s)$-copula associated with $\zeta^*(C_S)$ is $(\xi^{-1})^*(A)$ where $\xi\spry$ is the unique permutation of $\overline{n-s} = \{1,2,\cdots,n-s\}$ which satisfies 
\[
  \zeta\spry \circ \tau\spry \circ \xi\spry (1) < \zeta\spry \circ \tau\spry \circ \xi\spry (2) < \cdots <  \zeta\spry \circ \tau\spry \circ \xi\spry (n-s).
\]

  \item  \label{propcop.2}  
If $R$ is a subset of $\nbar$ such that $R \cap S = \emptyset$, then the proper $(n-s)$-copula associated with $\sigma_R^*(C_{S})$ is $\sigma_{\tau^{\prime -1}(R)}^*(A)$.

  \item  \label{propcop.3}
If $s+1 \leq n$ and $i \in \nbar -S$ and $B$ is the proper $(n-s-1)$-copula associated with $A_{\tau^{\prime -1}(i)}$, then $B$ is also the proper $(n-s-1)$-copula associated with $C_{S+\{i\}}$.
\end{enumerate}
\end{proposition}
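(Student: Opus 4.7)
The plan is to prove the three parts in turn, using the factorization $C_S = (A \otimes 1^s)\circ\tau$ together with the action of symmetries developed in Proposition \ref{proposition2}.

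For \eqref{propcop.1}, apply part \eqref{prop2-3} of Proposition \ref{proposition2} to get $\zeta^*(C_S) = C_S\circ\zeta = (A\otimes 1^s)\circ(\tau\circ\zeta)$. Setting $\eta\spry = \zeta\spry\circ\tau\spry$, the value at $x$ equals $A(x_{\eta\spry(1)},\dots,x_{\eta\spry(n-s)})$, since the remaining slots are absorbed by $1^s$. The active set of $\zeta^*(C_S)$, written in natural order, is $\zeta\spry(\nbar - S) = \{k_1 < \cdots < k_{n-s}\}$, and the $\xi\spry$ described in the statement is precisely the rearrangement converting the sequence $\eta\spry(1),\dots,\eta\spry(n-s)$ into $k_1,\dots,k_{n-s}$. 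Substituting $y_\ell = x_{k_\ell}$ rewrites the expression as $A(y_{\xi^{\prime -1}(1)},\dots,y_{\xi^{\prime -1}(n-s)})$, which equals $(\xi^{-1})^*(A)(y)$ by part \eqref{prop2-3}. This exhibits the desired proper copula.

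For \eqref{propcop.2}, the key step is the conjugation identity $\tau\circ\sigma_R = \sigma_{\tau^{\prime -1}(R)}\circ\tau$, obtained by following which coordinate gets reflected. Combining this with $C_S = \tau^*(A\otimes 1^s)$ and parts \eqref{prop2-1}, \eqref{prop2-3} gives
\[
  \sigma_R^*(C_S) = (\tau\circ\sigma_R)^*(A\otimes 1^s) = (\sigma_{R'}\circ\tau)^*(A\otimes 1^s) = \tau^*\bigl(\sigma_{R'}^*(A\otimes 1^s)\bigr),
\]
where $R' = \tau^{\prime -1}(R)$. Because $R\cap S = \emptyset$ forces $R'\subseteq\overline{n-s}$, a short induction using part \eqref{prop2-6} shows $\sigma_{R'}^*(A\otimes 1^s) = \sigma_{R'}^*(A)\otimes 1^s$. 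Hence $\sigma_R^*(C_S) = (\sigma_{R'}^*(A)\otimes 1^s)\circ\tau$, which is the proper-copula factorization with the same connecting $\tau$ and proper copula $\sigma_{\tau^{\prime -1}(R)}^*(A)$.

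For \eqref{propcop.3}, begin with $C_{S+\{i\}} = (C_S)_{\{i\}}$ (Proposition \ref{proposition0}, part~(2)) and the identity $\tau(x_{\{i\}}) = (\tau(x))_{\{\tau^{\prime -1}(i)\}}$, which together yield $C_{S+\{i\}} = (A_j\otimes 1^s)\circ\tau$ with $j = \tau^{\prime -1}(i) \in \overline{n-s}$. Applying Proposition \ref{proposition0}\eqref{prop0-3} to $A_j$, factor $A_j = (B\otimes 1^1)\circ\rho$ and extend $\rho$ to $\tilde\rho$ by the identity on the last $s$ slots so that $A_j\otimes 1^s = (B\otimes 1^{s+1})\circ\tilde\rho$; this produces $C_{S+\{i\}} = (B\otimes 1^{s+1})\circ(\tilde\rho\circ\tau)$. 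A direct check shows that $(\tilde\rho\circ\tau)^{\prime -1}$ restricted to $\nbar-(S+\{i\})$ is the composition of the order-preserving $\tau^{\prime -1}$ (onto $\overline{n-s}-\{j\}$) with the order-preserving $\rho^{\prime -1}$ (onto $\overline{n-s-1}$), hence is itself order-preserving onto $\overline{n-s-1}$. The uniqueness clause of Proposition \ref{proposition0}\eqref{prop0-3} then identifies $B$ as the proper copula of $C_{S+\{i\}}$.

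The principal obstacle is bookkeeping: the argument shuttles between $\nbar$, $\overline{n-s}$, and $\overline{n-s-1}$ via $\tau\spry$, $\rho\spry$, and their inverses, and each assertion requires order-preservation on a specific subset. The substance of each claim is short, but it is easy to lose track of the ambient indexing if one abbreviates too aggressively.
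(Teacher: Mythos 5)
Your proposal is correct and follows essentially the same route as the paper's proof: factor $C_S=(A\otimes 1^s)\circ\tau$, track the action of $\zeta$, $\sigma_R$, and the extra marginal through this factorization, and identify the proper copula via the order-preservation/uniqueness clause of Proposition \ref{proposition0}\eqref{prop0-3}. The only cosmetic differences are that in part \eqref{propcop.2} you obtain $\sigma_{R'}^*(A\otimes 1^s)=\sigma_{R'}^*(A)\otimes 1^s$ by induction from Proposition \ref{proposition2}\eqref{prop2-6} where the paper uses a direct measure computation, and in part \eqref{propcop.3} you argue abstractly with compositions of order-preserving maps where the paper writes out the permutation $\rho\spry$ explicitly.
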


\begin{proof}
  \eqref{propcop.1} \quad  We first notice that 
\[
  \zeta^*(C_S)(x_1,\cdots,x_n) = C_S(x_{\zeta\spry(1)},\cdots,x_{\zeta\spry(n)}) = 
    A(x_{\zeta\spry\circ\tau\spry(1)},\cdots,x_{\zeta\spry\circ\tau\spry(n-s)}).
\]
This tells us that the active set of $\zeta^*(C_S)$ is $\zeta\spry\circ\tau\spry(\overline{n-s})$.  Let $\xi\spry$ be the unique permutation of $\overline{n-s}$ which satisfies 
\[
  \zeta\spry\circ\tau\spry\circ\xi\spry(1) < \zeta\spry\circ\tau\spry\circ\xi\spry(2) < \cdots < \zeta\spry\circ\tau\spry\circ\xi\spry(n-s).
\]
We then extend $\xi\spry$ to the rest of $\nbar$ in any way we wish so that $\xi\spry$ becomes a permutation of $\nbar$.  We see that 
\begin{align*}
  \zeta^*(C_S)(x_1,\cdots,x_n) &= A(x_{\zeta\spry\circ\tau\spry(1)},\cdots,x_{\zeta\spry\circ\tau\spry(n-s)}) \\
  =& A\circ\xi^{-1}\circ\xi(x_{\zeta\spry\circ\tau\spry(1)},\cdots,x_{\zeta\spry\circ\tau\spry(n-s)}) \\
  =& (\xi^{-1})^*(A)(x_{\zeta\spry\circ\tau\spry\circ\xi\spry(1)},\cdots,x_{\zeta\spry\circ\tau\spry\circ\xi\spry(n-s)}) \\
  =& (((\xi^{-1})^*(A) \otimes 1^s)\circ(\zeta\circ\tau\circ\xi))(x_1,\cdots,x_n).
\end{align*}
If we write the active set of $\zeta^*(C_S)$, namely $\zeta\spry\circ\tau\spry(\overline{n-s})$, as $\{i_1,\cdots,i_{n-s}\}$ where $i_1 < \cdots < i_{n-s}$, then by the definition of $\xi\spry$ we must have $(\zeta\circ\tau\circ\xi)^{-1}(i_k) = k$.  It follows from this that $(\zeta\spry\circ\tau\spry\circ\xi\spry)^{-1}$ is order-preserving on the active set of $\zeta^*(C_S)$ and that 
\[
  (\zeta\spry\circ\tau\spry\circ\xi\spry)^{-1}(\text{active set of }\zeta^*(C_S)) = \overline{n-s}.
\]
Referring to part \eqref{prop0-3} of Proposition \ref{proposition0}, we see that this means that $(\xi^{-1})^*(A)$ is the proper $(n-s)$-copula associated with $\zeta^*(C_S)$.

\bigskip

\eqref{propcop.2} \quad  We see that 
\[
  \sigma_R^*(C_{S}) = \sigma_R^*((A \otimes 1^s)\circ\tau) 
  = (\tau \circ \sigma_R)^*(A \otimes 1^s) 
  = (\sigma_{\tau^{\prime -1}(R)} \circ \tau)^*(A \otimes 1^s).
\]
Now $R \subseteq \nbar-S$, so $\tau^{\prime -1}(R) \subseteq \tau^{\prime -1}(\nbar-S) = \overline{n-s}$.  If we take $(x_1,\cdots,x_n)$ a point in $I^n$ and set 
\[
  J_k = \begin{cases}
    &[0,x_k] \text{ if } k \notin \tau^{-1}(R), \\
    &[1-x_k,1] \text{ if } k \in \tau^{-1}(R),
    \end{cases}
\]
then we see that
\[
  \sigma_{{\tau'}^{-1}(R)}^*(A \otimes 1^s)(x_1,\cdots,x_n)  
  = \mu_A(J_1 \times \cdots \times J_{n-s}) 
  = (\sigma_{\tau^{\prime -1}(R)}^*(A) \otimes 1^s)(x_1,\cdots,x_n).
\]
Thus 
\[
  \sigma_R^*(C_{S}) 
  = \tau^*(\sigma_{\tau^{\prime -1}(R)}^*(A \otimes 1^s)) 
  = (\sigma_{\tau^{\prime -1}(R)}^*(A) \otimes 1^s) \circ \tau,
\]
so that $\sigma_{\tau^{\prime -1}(R)}^*(A)$ is seen to be the proper $(n-s)$-copula associated with $\sigma_R^*(C_{S})$.

\bigskip

\eqref{propcop.3} \quad  We note that $i$ is a member of the active set of $C_S$, namely $\nbar -S = \tau\spry(\overline{n-s})$.  It follows that $\tau^{\prime -1}(i) \in \overline{n-s}$, hence $A_{\tau^{\prime -1}(i)}$ is defined.  Set $j = \tau^{\prime -1}(i)$.  Let $B$ be the proper $(n-s-1)$-copula associated with $A_{\tau^{\prime -1}(i)}$.  We have 
\begin{gather*}
  A_{\tau^{\prime -1}(i)}(x_1, \cdots , x_{j-1},x_j,x_{j+1},\cdots ,x_n) 
  = A(x_1, \cdots , x_{j-1},1,x_{j+1},\cdots ,x_n) \\
  = B(x_1, \cdots , x_{j-1},x_{j+1},\cdots ,x_n).
\end{gather*}
The proper permutation associated with $A_{\tau^{\prime -1}(i)}$ is seen to be 
\[
  \rho\spry = 
  \left(
  \begin{matrix}
    1 & 2 & \cdots & j-1 & j & j+1 & \cdots & n-s-1 & n-s \\
    1 & 2 & \cdots & j-1 & j+1 & j+2 & \cdots & n-s & j 
  \end{matrix}
  \right).
\]
We extend this to a permutation of $\nbar$ by setting $\rho\spry(k)=k$ for $k>n-s$.  We then compute 
\begin{align*}
  (B \otimes 1^{s+1})&\circ \rho\circ\tau(x_1, \cdots , x_n) = 
    (B \otimes 1^{s+1})(x_{\tau\spry\circ\rho\spry(1)}, \cdots , x_{\tau\spry\circ\rho\spry(n)}) \\
  &= B(x_{\tau\spry\circ\rho\spry(1)}, \cdots , x_{\tau\spry\circ\rho\spry(n-s-1)}) \\
  &= B(x_{\tau\spry(1)}, \cdots ,x_{\tau\spry(j-1)},x_{\tau\spry(j+1)}, \cdots ,x_{\tau\spry(n-s)}) \\
  &= A_j(x_{\tau\spry(1)}, \cdots ,x_{\tau\spry(j-1)},x_{\tau\spry(j)},x_{\tau\spry(j+1)}, \cdots ,x_{\tau\spry(n-s)}) \\
  &= (A_j \otimes 1^s)(\tau(x)) \\
  &= (A \otimes 1^s)((\tau(x))_{\{j\}}) \\
  &= (A \otimes 1^s)(\tau(x_{\{i\}})) \\
  &= C_S(x_{\{i\}}) \\
  &= C_{S+\{i\}}(x).
\end{align*}
Since $\tau\spry\circ\rho\spry(1) < \tau\spry\circ\rho\spry(2) < \cdots < \tau\spry\circ\rho\spry(n-s-1)$, we see that $B$ is the proper $(n-s-1)$-copula associated with $C_{S+\{i\}}$.
\end{proof}

We now want to show that a measure of concordance $\kappa$ acts on a marginal of a copula, $C_T$, in ways that one might expect.  It is helpful to first note the easily verified fact that if $\tau$ is a permutation of $I^n$ and $S \subseteq \nbar$, then 
\begin{equation} \label{permref}
  \tau \circ \sigma_{\tau\spry(S)} = \sigma_S \circ \tau \quad \text{or, equivalently,} \quad 
    \tau \circ \sigma_{S} = \sigma_{\tau^{\prime -1}(S)} \circ \tau .
\end{equation}
It may also be helpful to bear in mind that, as was shown earlier,
\[
  \text{if $R$ and $S$ are disjoint, then } \sigma_S^*(C)_R = \sigma_S^*(C_R).
\] 

\begin{proposition} \label{mocmarg}
Let $C$ be an $n$-copula, and suppose $R$ and $S$ are subsets of $\nbar$, with $r = \text{card}(R)$ and $s = \text{card}(S)$, such that $R \cap S = \emptyset$ and $R \cup S = \nbar$.  (Note that $r+s=n$.)  Then the following hold:
\begin{enumerate}[\upshape (1)]
  \item \label{mocmarg.1}  If $\zeta$ is a permutation of $I^n$, then 
  \[
    \kappa_{n-r}(\zeta^*(C_R)) = \kappa_{n-r}(C_R) \quad \text{and} \quad 
    \kappa_{n-r}(\zeta^*(C)_R) = \kappa_{n-r}(C_{\zeta^{\prime -1}(R)}).
  \]
  \item \label{mocmarg.2}  $\kappa_s(\sigma_S^*(C_{R})) = \kappa_s(C_R)$.
  \item \label{mocmarg.3}  $\underset{T \subseteq S}{\sum} \kappa_s(\sigma_T^*(C_{R})) = 0$.
  \item \label{mocmarg.4}  If $i \in \nbar -R = S$ and $1 \leq s$, then
\[
  r_{s-1} \kappa_{s-1}(C_{R+\{i\}}) = \kappa_s(C_R) + \kappa_s(\sigma_i^*(C_R)).
\]
\end{enumerate}
\end{proposition}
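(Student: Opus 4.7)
The overall plan is to reduce each of the four statements to the appropriate concordance axiom applied to the proper copula of $C_R$. Write $A$ for the proper $s$-copula of $C_R$ and let $\tau$ be an associated proper permutation, so that $C_R=(A\otimes 1^r)\circ\tau$ with $\tau^{\prime -1}$ order-preserving on $\nbar-R$ and $\tau^{\prime -1}(\nbar-R)=\overline{s}=\{1,\ldots,s\}$. By the convention introduced in the preceding subsection, $\kappa_s(C_R)=\kappa_s(A)$, and similarly for any marginal appearing below.

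Parts \eqref{mocmarg.1}, \eqref{mocmarg.2}, and \eqref{mocmarg.3} are essentially direct. For \eqref{mocmarg.1}, Proposition \ref{propcop}\eqref{propcop.1} identifies the proper copula of $\zeta^*(C_R)$ as $(\xi^{-1})^*(A)$ for the permutation $\xi$ described there, and axiom A4 applied to $A$ yields the first equality; the second follows by first rewriting $\zeta^*(C)_R=\zeta^*(C_{\zeta^{\prime -1}(R)})$ via Proposition \ref{proposition2}\eqref{prop2-4} and then invoking the first. For \eqref{mocmarg.2}, since $S=\nbar-R$, Proposition \ref{propcop}\eqref{propcop.2} shows the proper copula of $\sigma_S^*(C_R)$ is $\sigma_{\tau^{\prime -1}(S)}^*(A)=\sigma_{\overline{s}}^*(A)=\sigma^*(A)$, the full reflection on $I^s$, and A5 (Duality) closes the argument. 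For \eqref{mocmarg.3}, the same Proposition gives that the proper copula of $\sigma_T^*(C_R)$ is $\sigma_{\tau^{\prime -1}(T)}^*(A)$, and as $T$ ranges over the subsets of $\nbar-R$, $\tau^{\prime -1}(T)$ ranges over all subsets of $\overline{s}$; the sum thus reduces to the sum of $\kappa_s(\rho^*(A))$ over the reflection group of $I^s$, which vanishes by A6.

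The substantive step is \eqref{mocmarg.4}, where the Transition Property A7 enters. Set $j=\tau^{\prime -1}(i)\in\overline{s}$. Proposition \ref{propcop}\eqref{propcop.3} says the proper $(s-1)$-copula $B$ of $A_j$ is also the proper copula of $C_{R+\{i\}}$, and Proposition \ref{propcop}\eqref{propcop.2} identifies $\kappa_s(\sigma_i^*(C_R))$ with $\kappa_s(\sigma_j^*(A))$. If $j=1$, then $A(1,x_2,\ldots,x_s)=B(x_2,\ldots,x_s)$, so A7 applied to the pair $(B,A)$ directly yields $r_{s-1}\kappa_{s-1}(B)=\kappa_s(A)+\kappa_s(\sigma_1^*(A))$, which is the desired identity after retranslation.

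For general $j$, the plan is to rotate the distinguished coordinate into position $1$: let $\rho$ be the transposition $(1,j)$ on $\overline{s}$, form $\rho^*(A)$, and verify that $\rho^*(A)(1,x_2,\ldots,x_s)$ equals $B$ composed with a fixed permutation of $(x_2,\ldots,x_s)$. Applying A7 to this copula, A4 absorbs the inner reshuffling of $B$'s arguments on the left. On the right, the commutation relation \eqref{permref} lets me rewrite $\sigma_1^*\circ\rho^*=\rho^*\circ\sigma_j^*$, and A4 peels off the outer $\rho^*$, leaving $\kappa_s(A)+\kappa_s(\sigma_j^*(A))$. Translating $\sigma_j^*(A)$ back to $\sigma_i^*(C_R)$ by Proposition \ref{propcop}\eqref{propcop.2} completes the argument. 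The only real obstacle is notational---tracking how the labels in $\nbar$ correspond to positions in $\overline{s}$ through $\tau$ and $\rho$; all substantive content is in axioms A4, A5, A6, A7 together with Proposition \ref{propcop}.
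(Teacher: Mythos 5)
Your argument is correct and, for parts (1)--(3), is exactly the paper's proof: pass to the proper copula $A$ of $C_R$ via Proposition \ref{propcop}, then invoke Permutation Invariance, Duality, and RSP respectively, using the bijection $T \mapsto \tau^{\prime -1}(T)$ between subsets of $S$ and subsets of $\overline{s}$ for part (3). For part (4) you diverge slightly in a useful way: the paper simply cites ``the Transition Property for copulas'' in the form $r_{s-1}\kappa_{s-1}(B) = \kappa_s(A) + \kappa_s(\sigma_{\tau^{\prime -1}(i)}^*(A))$, i.e.\ with the unit coordinate in an arbitrary position $j=\tau^{\prime -1}(i)$, which is really the generalized transition identity recalled later (Section 4.1, Theorem 1(1)) from the earlier paper rather than axiom A7 itself; you instead derive that generalized identity on the spot by conjugating with the transposition $\rho=(1,j)$, applying A7 to $\rho^*(A)$, and using $\sigma_1^*\circ\rho^* = \rho^*\circ\sigma_j^*$ (from Proposition \ref{proposition2}\eqref{prop2-1} and equation (\ref{permref})) together with A4 to absorb $\rho^*$ on both sides. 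That extra reduction is sound (your commutation computation checks out, since $\rho^{\prime -1}(1)=j$), and it makes the proof self-contained relative to the axioms A1--A7, at the cost of a little coordinate bookkeeping that the paper avoids by leaning on the previously published general-position version of TP.
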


\begin{proof}
Throughout this proof we take $A$ to be the proper $(n-r)$-copula associated with $C_R$ and $\tau$ to be a permutation of $I^n$ such as described in part \eqref{prop0-3} of Proposition \ref{proposition0} so that $C_R = (A \otimes 1^r)\circ\tau$. 

	\eqref{mocmarg.1} \quad  
We know from Proposition \ref{propcop} that the proper $(n-r)$-copula associated with $\zeta^*(C_R)$ has the form $\psi^*(A)$ where $\psi$ is a permutation of $I^{n-r}$.  Then making use of the Permutation Invariance axiom for measures of concordance, we have 
\[
  \kappa_{n-r}(\zeta^*(C_R)) = \kappa_{n-r}(\psi^*(A)) = \kappa_{n-r}(A) 
  = \kappa_{n-r}(C_R).
\]

Next we know that $\zeta^*(C)_R = \zeta^*(C_{\zeta^{\prime -1}(R)})$.  Thus, appealing to the first half of this proof, we have  
\[
   \kappa_{n-r}((\zeta^*(C)_R) = \kappa_{n-r}(\zeta^*(C_{\zeta^{\prime -1}(R)})) 
    = \kappa_{n-r}(C_{\zeta^{\prime -1}(R)}),
\]
and we are done.

\bigskip

	\eqref{mocmarg.2} \quad
By Proposition \ref{propcop} the proper $(n-r)$-copula associated with $\sigma_S^*(C_{R})$ is $\sigma_{\tau^{\prime -1}(S)}^*(A)$.  Now we know that $S = \nbar - R$ and $\tau^{\prime -1}(\nbar - R) = \overline{n-r} = \overline{s}$, so that $\sigma_{\tau^{-1}(S)}^*(A) = \sigma_{\overline{s}}^*(A)$.  Since $A$ is an $s$-copula, this means we can use the Duality axiom for measures of concordance in the calculation 
\[
  \kappa_s(\sigma_S^*(C_R)) = \kappa_s(\sigma_{\overline{s}}^*(A)) = \kappa_s(A) = \kappa_s(C_R),
\]
and we are done.

\bigskip

	\eqref{mocmarg.3} \quad
Following the notation of the last part, we have 
\[
  \underset{T \subseteq S}{\sum} \kappa_s(\sigma_T^*(C_R)) 
  = \underset{T \subseteq S}{\sum} \kappa_s(\sigma_{\tau^{\prime -1}(T)}^*(A)) 
  = \underset{P \subseteq \overline{s}}{\sum} \kappa_s(\sigma_P^*(A)).
\]
But as $P$ runs through all subsets of $\overline{s}$, we see that $\sigma_P$ runs through all the reflections of $I^s$, so that by the Reflection Symmetry Property of measures of concordance we have 
\[
  \underset{P \subseteq \overline{s}}{\sum} \kappa_s(\sigma_P^*(A)) = 0.
\]
Thus we are done.

\bigskip

	\eqref{mocmarg.4} \quad 
Let $B$ be the proper $(s-1)$-copula for $A_{\tau^{\prime -1}(i)}$.  We know from part \eqref{propcop.3} of Proposition \ref{propcop} that $B$ is also the proper $(s-1)$-copula for $C_{R+\{i\}}$.  By the Transition Property for copulas we have 
\[
  r_{s-1} \kappa_{s-1}(B) = \kappa_s(A) + \kappa_s(\sigma_{\tau^{\prime -1}(i)}^*(A)).
\]
By the uniqueness of proper copulas and part \eqref{propcop.2} of Proposition \ref{propcop}, we know that $\sigma_{\tau^{\prime -1}(i)}^*(A)$ is the proper $s$-copula associated with $\sigma_i^*(C_R)$.  Thus the last equation becomes 
\[
  r_{s-1} \kappa_{s-1}(C_{R+\{i\}}) = \kappa_s(C_R) + \kappa_s(\sigma_i^*(C_R)),
\]
and we are done.
\end{proof}

We now touch on the relationship between stochastic inequality and extended marginals of copulas.

We recall from \cite{taylor04a} that to say $A \prec_{\mathcal{C}}B$, where $A$ and $B$ are two $n$-copulas, means that $A \leq B$ and $\sigma^*(A) \leq \sigma^*(B)$ where $\sigma = \sigma_{\nbar} = \sigma_1\sigma_2\cdots\sigma_n$.  If $S \subseteq \nbar$, then we define $A_S  \prec_{\mathcal{C}} B_S$ to mean that $A\spry  \prec_{\mathcal{C}} B\spry$ where $A\spry$ and $B\spry$ are the proper copulas associated with $A$ and $B$ respectively.

\begin{proposition} \label{margmono}
  If $A$ and $B$ are $n$-copulas such that $A  \prec_{\mathcal{C}} B$, then for all $S \subseteq \nbar$ we have $A_S  \prec_{\mathcal{C}} B_S$.
\end{proposition}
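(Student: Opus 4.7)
The plan is to deduce the two defining inequalities for $A_S \prec_{\mathcal{C}} B_S$ --- namely $A\spry \leq B\spry$ and $(\sigma^{n-s})^*(A\spry) \leq (\sigma^{n-s})^*(B\spry)$ on $I^{n-s}$ --- by first establishing their lifts to $I^n$ as inequalities on the extended marginals themselves, and then pulling them down through the common factorization. By Proposition \ref{proposition0}\eqref{prop0-3}, both $A_S$ and $B_S$ have the form $(\cdot \otimes 1^s) \circ \tau$ with the \emph{same} proper permutation $\tau$ (the choice of $\tau$ is dictated by $S$ alone). Consequently, any pointwise inequality $f \leq g$ on $I^n$ with $f = (A\spry \otimes 1^s)\circ\tau$ and $g = (B\spry \otimes 1^s)\circ\tau$ specializes, by evaluating at $x = \tau^{-1}(y_1,\ldots,y_{n-s},1,\ldots,1)$ for arbitrary $y \in I^{n-s}$, to $A\spry(y) \leq B\spry(y)$.

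The first lifted inequality $A_S \leq B_S$ is immediate from $A \leq B$, since $A_S(x) = A(x_S) \leq B(x_S) = B_S(x)$ for every $x \in I^n$. The less obvious step is the second, which I would base on the identity
\[
  \sigma_{\nbar - S}^*(A_S) \;=\; \sigma^*(A)_S,
\]
with $\sigma = \sigma^n$. This can be verified by direct measure computation: both sides equal $\mu_A\bigl(\prod_{i \in \nbar - S}[1-x_i,1] \times \prod_{i \in S} I\bigr)$. Alternatively, decompose $\sigma^* = \sigma_{\nbar - S}^* \circ \sigma_S^*$ and use Proposition \ref{proposition2}\eqref{prop2-8} to commute $(\cdot)_S$ with $\sigma_{\nbar - S}^*$ (valid because $(\nbar - S)\cap S = \emptyset$), then observe that $(\sigma_S^*(A))_S = A_S$ because $\sigma_S$ fixes the box $[0,x_S]$ once its $S$-coordinates are $[0,1]$. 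Granted the identity, $\sigma^*(A) \leq \sigma^*(B)$ immediately gives $\sigma_{\nbar - S}^*(A_S) \leq \sigma_{\nbar - S}^*(B_S)$ on $I^n$.

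Finally, Proposition \ref{propcop}\eqref{propcop.2} applied with $R = \nbar - S$ identifies the proper $(n-s)$-copula of $\sigma_{\nbar - S}^*(A_S)$ as $\sigma_{\tau^{\prime -1}(\nbar - S)}^*(A\spry) = \sigma_{\overline{n-s}}^*(A\spry) = (\sigma^{n-s})^*(A\spry)$, with the same identification for $B$ via the same $\tau$. Specializing as in the first paragraph then transports both lifted inequalities to $A\spry \leq B\spry$ and $(\sigma^{n-s})^*(A\spry) \leq (\sigma^{n-s})^*(B\spry)$, which together say $A\spry \prec_{\mathcal{C}} B\spry$, i.e.\ $A_S \prec_{\mathcal{C}} B_S$. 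The only genuine obstacle is the identity in the middle paragraph: a tempting shortcut via Proposition \ref{proposition2}\eqref{prop2-8} applied directly to $\sigma_S^*(A_S)$ fails because $S \cap S \neq \emptyset$ puts that case in the zero branch, so one must handle $(\sigma_S^*(A))_S$ by hand.
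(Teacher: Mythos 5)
Your argument is correct and is essentially the paper's own proof: both establish $A\spry \leq B\spry$ and $\sigma_{\overline{n-s}}^*(A\spry) \leq \sigma_{\overline{n-s}}^*(B\spry)$ by lifting the hypotheses to pointwise inequalities between extended marginals on $I^n$ and pulling them down through the common factorization $(\cdot \otimes 1^s)\circ\tau$. The only difference is bookkeeping: where the paper obtains $\sigma_{\nbar}^*(A)_S = (\sigma_{\overline{n-s}}^*(A\spry) \otimes 1^s)\circ\tau$ by ``imitating the proof of Proposition \ref{propcop}\eqref{propcop.2},'' you first prove the bridge $\sigma_{\nbar}^*(A)_S = \sigma_{\nbar - S}^*(A_S)$ (via Proposition \ref{proposition2}\eqref{prop2-7} and \eqref{prop2-8}, or by direct measure computation) and then invoke Proposition \ref{propcop}\eqref{propcop.2} as stated.
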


\begin{proof}
  Choose $S \subseteq \nbar$.  Let $s = \text{card}(S)$ and $m = n-s$.  We let $A\spry$ and $B\spry$ denote the proper $m$-copulas of $A_S$ and $B_S$ respectively.  If $\tau$ is a proper permutation of $A_S$, then we can write $A_S = (A\spry \otimes I^s)\circ\tau$.  Since $A_S$ and $B_S$ have the same inactive set, they have the same active set; hence $\tau$ must be a proper permutation for $B_S$ as well as $A_S$, and we can write $B_S = (B\spry \otimes I^s)\circ\tau$.

It then follows from $A \leq B$ that $A\spry \otimes I^s \leq B\spry \otimes I^s$, and hence $A\spry \leq B\spry$.

We now need only show that $\sigma_{\mbar}^*(A\spry) \leq \sigma_{\mbar}^*(B\spry)$.  We know that $\sigma_{\nbar}^*(A) \leq \sigma_{\nbar}^*(B)$, so for any $x \in I^n$ we have 
\[
  \sigma_{\nbar}^*(A)_S(x) = \sigma_{\nbar}^*(A)(x_S) \leq 
    \sigma_{\nbar}^*(B)(x_S) = \sigma_{\nbar}^*(B)_S(x).
\]
By imitating the proof of Proposition \ref{propcop}\eqref{propcop.2}, we can show that 
\[
  \sigma_{\nbar}^*(A)_S = (\sigma_{\mbar}^*(A\spry) \otimes 1^s)\circ\tau \quad \text{and} 
  \quad \sigma_{\nbar}^*(B)_S = (\sigma_{\mbar}^*(B\spry) \otimes 1^s)\circ\tau.
\]
It then follows from $\sigma_{\nbar}^*(A)_S \leq \sigma_{\nbar}^*(B)_S$ that $\sigma_{\mbar}^*(A\spry) \leq \sigma_{\mbar}^*(B\spry)$.
\end{proof}

\subsection{The $\fA^{S}_{j}(C_T)$ notation}

The quantities that we now describe arise later, in a natural way, when we show how to compute the measure of concordance of $\xi^*(C)$, where $\xi$ is a reflection, in terms of the measures of concordance of marginals of $C$, or in deriving relations such as the \'{U}beda identities.  

Let $\kappa = (\{\kappa_n\},\{r_n\})$ be a given measure of concordance.  Now let us suppose that 
\begin{gather*}
	C \text{ is an $n$-copula,} \\
	S, T \text{ are subsets of } \nbar, \\
	\text{ and } \; t = \text{card\,}(T).
\end{gather*}
In everything that follows, we permit ourselves to indicate disjoint unions with the symbol $+$ instead of $\cup$.  That is, $S+T$ denotes $S \cup T$ with the understanding that $S \cap T = \emptyset$.

We define 
\begin{equation} \label{frakA}
	\fA^{S}_{j}(C_T) = r_{n-t} r_{n-t-1} \cdots r_{j} 
	  \underset{R \in (S-T)(n-j-t)}{\sum} \kappa_{j}(C_{R+T})
\end{equation}
where this expression makes sense if $0 \leq j,t \leq n$.  If $(S-T)(n-j-t) = \emptyset$, then we take $\fA^{S}_{j}(C_T)$ to be $0$.  If $S = \nbar$ or $T = \emptyset$, then we feel free to omit them in this notation.  Thus 
\begin{gather*}
	\fA^{S}_j(C) = r_n r_{n-1} \cdots r_{j}\underset{R \in S(n-j)}{\sum} \kappa_{j}(C_{R}), \\
	\fA_{j}(C_T) = r_{n-t} r_{n-t-1} \cdots r_{j}\underset{R \in (\nbar -T)(n-j-t)}{\sum} \kappa_{j}(C_{R+T}), \\
	\fA_j(C) = r_n r_{n-1} \cdots r_{j}\underset{T \in \nbar(n-j)}{\sum} \kappa_{j}(C_{T}).
\end{gather*}
For example suppose $C$ is a 4-copula, that $S = \{1,2,3\}$, and $T = \{1\}$.  Then 
\begin{gather*}
	\fA_2(C) = r_4 r_3 r_2 [ \kappa_2(C_{12}) + \kappa_2(C_{13}) + \kappa_2(C_{14}) + 
	  \kappa_2(C_{23}) + \kappa_2(C_{24}) + \kappa_2(C_{34})], \\
	\fA_{2}^S(C) =  r_4 r_3 r_2 [ \kappa_2(C_{12}) + \kappa_2(C_{13}) + \kappa_2(C_{23})], \\
	\fA_{2}^S(C_T) = r_3  r_2 [ \kappa_2(C_{12}) + \kappa_2(C_{13})].
\end{gather*}

\bigskip

\begin{note}
By the definition of $\fA_j^S(C_T)$ we have $\fA_j^S(C_T) = \fA_j^{S-T}(C_T)$, therefore we may, if we wish, when using this notation, assume that $S$ and $T$ are disjoint.
\end{note}

\bigskip

If $C_T$ is an extended copula and $A$ is the associated proper copula, then $\fA_{j}^S$ evaluated at $C_T$ should be thought of as operating on $A$.  For example, suppose $n=m+2$, that $A$ is an $m$-copula, and $C_T = A \otimes 1^2$.  Then 
\[
	\fA_j(C_T) = \fA_j(A) = r_m \cdots r_j 
	  \underset{R \in \overline{m}(m-j)}{\sum} \kappa_{j}(A_{R}).
\]

We first give some of the basic properties of the $\fA^{S}_{j}(C_T)$ notation.

\begin{proposition} \label{basicfA}
	Let $C$ be an $n$-copula and $R, S, T$ be subsets of $\nbar$ such that $r = \text{card\,}(R)$, $s = \text{card\,}(S)$, and $t = \text{card\,}(T)$.  Then the follwing hold: 
	\begin{enumerate}[\upshape (1)]
	  \item \label{basicfA.1}  $\fA_{n-t}(C_T) = \fA_{n-t}^S(C_T) = r_{n-t} \kappa_{n-t}(C_T).$ (In particular, this is true for $S = \emptyset$.) It follows that 
\[
  \fA_j^S(C_T) = r_{n-t}r_{n-t+1}\cdots r_{j+1} \underset{P \in S(n-t-j)}{\sum}\fA_j^Q(C_{T+P})
\]
where $0 \leq j \leq n-t$, it is understood that $r_{n-t}r_{n-t+1}\cdots r_{j+1}=1$ if $j=n-t$, and $Q$ is any subset of $\nbar$.

	  \item \label{basicfA.2}  $\fA_{n-r}^S(C_T) = 0$ if $r>s+t$.  

	  \item \label{basicfA.4}  If $i \notin S,T$, then 
	  \[
	    \fA_{j}^{\{i\}+S}(C_T) = \fA_{j}^S(C_T) + r_{n-t} \fA_{j}^S(C_{\{i\}+T})
          \]
	  where $0 \leq j \leq n-t-1$.

	  \item \label{basicfA.5}  If $S \cap T = \emptyset$ and $0 \leq n-j-t \leq r \leq s \leq n$, then 
          \[
            \underset{R \in S(r)}{\sum} \fA_{j}^R(C_T) = \binom{s+j+t-n}{s-r} \fA_{j}^S(C_T).
	  \]  

	  \item \label{basicfA.6}  If $S \cap T = \emptyset$ and $r,s,t,j$ are nonnegative integers such that $r \leq s$ and $j+t+r \leq n$, then 
	  \[
		r_{n-t} r_{n-t-1} \cdots r_{n-t-r+1} \underset{R \in S(r)}{\sum}
		\fA_{j}^S(C_{T+R}) \quad = \quad \binom{n-j-t}{r} \, \fA_j^S(C_T),
	  \]
	where it is understood that $r_{n-t} r_{n-t-1} \cdots r_{n-t-r+1} = 1$ if $r=t=0$.
	 \end{enumerate}
\end{proposition}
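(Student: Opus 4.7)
The plan is to prove each of the six statements by direct unfolding of the definition (\ref{frakA}), invoking Lemma \ref{countinglemma} for the two parts that require genuine counting. By the note preceding the proposition I may freely assume $S \cap T = \emptyset$, since $\fA_j^S(C_T) = \fA_j^{S-T}(C_T)$; this simplifies the index sets $(S-T)(k)$ appearing throughout to simply $S(k)$.

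For part \eqref{basicfA.1}, substituting $j = n-t$ collapses the product $r_{n-t} r_{n-t-1} \cdots r_j$ to the single factor $r_{n-t}$ and reduces the index set $(S-T)(0) = \{\emptyset\}$ to the single term $\kappa_{n-t}(C_T)$. The displayed expansion then follows by grouping the outer sum in the definition of $\fA_j^S(C_T)$ according to an $(n-t-j)$-subset $P \subseteq S$ appended to $T$, and recognizing each inner quantity as $\fA_j^Q(C_{T+P})$, which by the first equality equals $r_j \kappa_j(C_{T+P})$. Part \eqref{basicfA.2} is immediate: if $r > s+t$ then $r-t > s \geq |S-T|$, so $(S-T)(r-t)$ is empty and the defining sum vanishes.

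For part \eqref{basicfA.4}, I would split the indexing set $(\{i\}+S-T)(n-j-t)$ according to whether $i \in R$ or not. The summands with $i \notin R$ reassemble into $\fA_j^S(C_T)$ verbatim. For the summands with $i \in R$, writing $R = \{i\} + R'$ where $R' \in (S-(\{i\}+T))(n-j-(t+1))$ and comparing the $r$-product prefactors $r_{n-t} r_{n-t-1} \cdots r_j$ versus $r_{n-t-1} \cdots r_j$ produces the missing factor $r_{n-t}$, giving the term $r_{n-t} \fA_j^S(C_{\{i\}+T})$.

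Parts \eqref{basicfA.5} and \eqref{basicfA.6} are the two places where Lemma \ref{countinglemma} does the real work. For \eqref{basicfA.5}, unfolding the left side produces the double sum $\sum_{R \in S(r)} \sum_{P \in R(n-j-t)} \kappa_j(C_{P+T})$, which is exactly the pattern of Lemma \ref{countinglemma}\eqref{counting.1} with $p = n-j-t$; the resulting binomial coefficient $\binom{s-(n-j-t)}{r-(n-j-t)}$ rewrites as $\binom{s+j+t-n}{s-r}$ via symmetry. For \eqref{basicfA.6}, the analogous unfolding yields $\sum_{R \in S(r)} \sum_{P \in (S-R)(n-j-t-r)} \kappa_j(C_{P+R+T})$, which matches Lemma \ref{countinglemma}\eqref{counting.2} with $q = n-j-t-r$, giving the factor $\binom{n-j-t}{r}$; a careful comparison of the $r$-product prefactors on the two sides, noting that the product on the right ranges over $\{j+1, \dots, n-t\}$ while the one on the left ranges over $\{j+1, \dots, n-t-r\}$ times the $r$ extra factors $r_{n-t-r+1}, \dots, r_{n-t}$, completes the identity.

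The main obstacle throughout is not conceptual but bookkeeping: tracking how the range of the $r$-product shifts whenever $T$ is enlarged by a set of size $r$ (shrinking the upper end from $n-t$ to $n-t-r$), and verifying that the disjointness assumption $S \cap T = \emptyset$ is preserved under each manipulation so that Lemma \ref{countinglemma} applies to the index sets $(S-T)(k)$, $R(k)$, and $(S-R)(k)$ with the expected cardinalities.
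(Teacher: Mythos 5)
Your proposal is correct and follows essentially the same route as the paper's own proof: each part is obtained by unfolding the definition of $\fA_j^S(C_T)$ (splitting on whether $i \in R$ for part \eqref{basicfA.4}) and invoking Lemma \ref{countinglemma} parts \eqref{counting.1} and \eqref{counting.2} for parts \eqref{basicfA.5} and \eqref{basicfA.6}, with the same bookkeeping of the $r$-product prefactors. The only quibble is a harmless indexing slip in your description of the prefactors in part \eqref{basicfA.6} (the products start at $r_j$, not $r_{j+1}$), which does not affect the comparison or the conclusion.
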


\begin{proof}
 \eqref{basicfA.1} \ Since $S(0) = \{\emptyset\}$ and $C_{T+\emptyset} = C_T$, we see that 
  \[
    \fA_{n-t}^S(C_T) = r_{n-t} \underset{R \in S(0)}{\sum} \kappa_{n-t}(C_{T+R}) = r_{n-t} \kappa_{n-t}(C_T).
  \]
The formula for $\fA_{j}^S(C_T)$ in terms of $\fA_j^Q(C_{T+P})$ then follows from the definition of the notation.

  \eqref{basicfA.2} \ Because $(S-T)(r-t) = \emptyset$ if $r-t>s$, we have 
  \[
    \fA_{n-r}^S(C_T) = r_{n-r} \cdots r_{n-t} 
     \underset{R \in (S-T)(r-t)}{\sum} \kappa_{n-r}(C_{R+T}) = 0.
  \]

  \eqref{basicfA.4} \ We see that 
  \begin{align*}
    \fA_{j}^{\{i\}+S}&(C_T) = r_{n-t} \cdots r_j \underset{R \in ((\{i\}+S)-T)(n-j-t)}{\sum}
	\kappa_j(C_{R+T}) \\
    =& \; r_{n-t} \cdots r_j \underset{R \in (S-T)(n-j-t)}{\sum}
	\kappa_j(C_{R+T}) + 
    r_{n-t} \cdots r_j \underset{P \in (S-T)(n-j-t-1)}{\sum}
	\kappa_j(C_{\{i\}+T+P}) \\
    =& \quad \fA_{j}^S(C_T) + r_{n-t} \fA_{j}^S(C_{\{i\}+T}).
  \end{align*}

  \eqref{basicfA.5} \ We see that 
  \begin{align*}
    \underset{R \in S(r)}{\sum}&\fA_{j}^R(C_T) = r_{n-t} \cdots r_j
      \underset{R \in S(r)}{\sum} \quad \underset{P \in R(n-j-t)}{\sum}\kappa_j(C_{T+P}) \\
      &= r_{n-t} \cdots r_j \binom{s+j+t-n}{r+j+t-n} 
      \underset{P \in S(n-j-t)}{\sum}\kappa_j(C_{T+P}) \\
    &= \binom{s+j+t-n}{s-r} \fA_{j}^S(C_T),
  \end{align*}
  where the middle step follows from Lemma \ref{countinglemma}\eqref{counting.1}.

\eqref{basicfA.6} \ We have 
\begin{align*} \label{basicfA.6-1}
  \underset{R \in S(r)}{\sum}&\fA_{j}^S(C_{T+R}) = \underset{R \in S(r)}{\sum} 
    r_{n-t-r} \cdots r_j \underset{P \in (S-R)(n-j-t-r)}{\sum} \kappa_j(C_{T+R+P}) \\
  &= r_{n-t-r} \cdots r_j \binom{n-j-t}{r} \underset{Q \in S(n-j-t)}{\sum} \kappa_j(C_{T+Q}),
\end{align*}
where the last step comes from Lemma \ref{countinglemma}\eqref{counting.2}. The conclusion follows easily.
\end{proof}

We now examine properties that $\fA_{j}^S(C_T)$ ``inherits'' from the measure of concordance $\kappa$.

\begin{proposition} \label{fAmoc}
  Let $C$ be an $n$-copula and $R,S,T$ be subsets of $\nbar$ such that $t = \text{card\,}(T)$.  Then the following hold:
  \begin{enumerate}[\upshape (1)]
  \item \label{fAmoc.1} \ If $\tau$ is a permutation of $I^n$, then 
  \[
    \fA_{j}^S((\tau^*(C))_T) = \fA_{j}^{\tau^{\prime -1}(S)}(C_{\tau^{\prime -1}(T)}) \quad \text{and} 
	\quad \fA_{j}^S(\tau^*(C_T)) = \fA_{j}^{\tau^{\prime -1}(S)}(C_T)
  \]
  where $j$ and $t$ are nonnegative integers such that $j+t \leq n$.

  \item \label{fAmoc.2} \ If $R+T = \nbar$, then $\fA_j^S(\sigma_R^*(C_T)) = \fA_j^S(C_T)$.    

  \item \label{fAmoc.3} \ $\underset{R \subseteq \nbar - T}{\sum}\fA_{j}^S(\sigma_R^*(C_T)) = 0$.

  \item \label{fAmoc.4} \ If $1+t \leq j+t \leq n$ and $i \in \nbar -(S \cup T)$, then
  \[
    r_{n-t}\fA_{j-1}^S(C_{T+\{i\}}) = \fA_j^S(C_T) + \fA_j^S(\sigma_i^*(C_T)).
  \]
  \end{enumerate}
\end{proposition}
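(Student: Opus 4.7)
The overall plan is to reduce each of (1)--(4) to the corresponding assertion of Proposition \ref{mocmarg} by unfolding the definition of $\fA_j^S(C_T)$ and pushing $\tau^*$ or $\sigma_R^*$ inside the marginal brackets via Proposition \ref{proposition2}.

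For (1), I would expand
\[
  \fA_j^S((\tau^*(C))_T) = r_{n-t}\cdots r_j \sum_{R \in (S-T)(n-j-t)} \kappa_j((\tau^*(C))_{R+T}).
\]
By Proposition \ref{proposition2}\eqref{prop2-4}, $(\tau^*(C))_{R+T} = \tau^*(C_{\tau^{\prime -1}(R+T)})$, and then Proposition \ref{mocmarg}\eqref{mocmarg.1} removes the outer $\tau^*$ inside $\kappa_j$. Reindexing the sum by $R' = \tau^{\prime -1}(R)$ converts $(S-T)(n-j-t)$ to $(\tau^{\prime -1}(S) - \tau^{\prime -1}(T))(n-j-t)$, giving the first identity. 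The second identity follows by rewriting $\tau^*(C_T) = (\tau^*(C))_{\tau\spry(T)}$ (again by Proposition \ref{proposition2}\eqref{prop2-4}) and applying the first identity with $T$ replaced by $\tau\spry(T)$.

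For (2), since $R+T = \nbar$ forces $R \cap T = \emptyset$, Proposition \ref{proposition2}\eqref{prop2-8} gives $\sigma_R^*(C_T) = (\sigma_R^*(C))_T$. A typical summand is $\kappa_j((\sigma_R^*(C))_{R'+T})$ with $R' \in (S-T)(n-j-t)$, and combining parts \eqref{prop2-7} and \eqref{prop2-8} of Proposition \ref{proposition2} rewrites it as $\kappa_j(\sigma_{R-R'}^*(C_{R'+T}))$. Since $R - R' = \nbar - T - R'$ is precisely the active set of $C_{R'+T}$, Proposition \ref{mocmarg}\eqref{mocmarg.2} gives $\kappa_j(\sigma_{R-R'}^*(C_{R'+T})) = \kappa_j(C_{R'+T})$, and (2) follows termwise. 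For (3), I first interchange the order of summation so that for each fixed $R'$ I need the inner sum $\sum_{R \subseteq \nbar - T} \kappa_j(\sigma_{R-R'}^*(C_{R'+T}))$ to vanish; decomposing $R = Q_1 + Q_2$ with $Q_1 \subseteq R'$ and $Q_2 \subseteq \nbar - T - R'$ factors out $2^{|R'|}$ and leaves $Q_2$ ranging over all subsets of the active set of $C_{R'+T}$, which kills the sum by Proposition \ref{mocmarg}\eqref{mocmarg.3}.

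For (4), since $i \notin S \cup T$, two applications of Proposition \ref{proposition2}\eqref{prop2-8} yield $\sigma_i^*(C_T) = (\sigma_i^*(C))_T$ and $(\sigma_i^*(C))_{R'+T} = \sigma_i^*(C_{R'+T})$ for each $R' \in (S-T)(n-j-t)$. Each pair $\kappa_j(C_{R'+T}) + \kappa_j(\sigma_i^*(C_{R'+T}))$ then collapses by Proposition \ref{mocmarg}\eqref{mocmarg.4} (valid since $j \geq 1$) to $r_{j-1}\kappa_{j-1}(C_{R'+T+\{i\}})$, and the identity $S - T = S - (T+\{i\})$, which holds because $i \notin S$, lets the summed result be recognized as $r_{n-t}\fA_{j-1}^S(C_{T+\{i\}})$. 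The main technical obstacle throughout is the index-set bookkeeping---tracking how $\tau\spry$ permutes $(S-T)(n-j-t)$ in (1) and verifying in (3) that the double sum factors cleanly so the reflection-symmetry axiom applies in its native form on the active set of $C_{R'+T}$; everything else is a direct appeal to Proposition \ref{mocmarg}.
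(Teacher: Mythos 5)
Your proposal is correct and follows essentially the same route as the paper: unfold the definition of $\fA_j^S(C_T)$, move $\tau^*$ or $\sigma_R^*$ through the marginal operation via Proposition \ref{proposition2} (parts \eqref{prop2-4}, \eqref{prop2-7}, \eqref{prop2-8}), and then apply the corresponding part of Proposition \ref{mocmarg} termwise (with the same reindexing in (1), the same complementary-set observation in (2), and the same interchange-and-count argument in (3), where your $2^{|R'|}$ factor is just the paper's $\sum_k\binom{p}{k}$ multiplicity written compactly).
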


\begin{proof}
\eqref{fAmoc.1} \ Suppose that $\tau$ is a permutation of $I^n$.  We then see that 
\begin{align*}
  \fA_{j}^S((&\tau^*(C))_T) = r_{n-t} \cdots r_j \underset{P \in (S-T)(n-j-t)}{\sum}
    \kappa_j((\tau^*(C))_{T+P}) \\
   &= \; r_{n-t} \cdots r_j \underset{P \in (S-T)(n-j-t)}{\sum}
    \kappa_j(\tau^*(C_{\tau^{\prime -1}(T)+\tau^{\prime -1}(P)})) \\
   &= \; r_{n-t} \cdots r_j \underset{P\in (S-T)(n-j-t)}{\sum}
    \kappa_j(C_{\tau^{\prime -1}(T)+\tau^{\prime -1}(P)}) \\
   &= \; r_{n-t} \cdots r_j \underset{Q \in (\tau^{\prime -1}(S)-\tau^{\prime -1}(T))(n-j-t)}{\sum}
    \kappa_j(C_{\tau^{\prime -1}(T)+Q}) \\
   &= \; \fA_{j}^{\tau^{\prime -1}(S)}(C_{\tau^{\prime -1}(T)}).
\end{align*}

The other equality follows from the fact that $\tau^*(C_T) = \tau^*(C)_{\tau\spry(T)}$.

\bigskip

\eqref{fAmoc.2} \ Using parts \eqref{prop2-8} and \eqref{prop2-7} of Proposition \ref{proposition2}, we see that 
\begin{align*}
  \fA_j^S(&\sigma_R^*(C_T)) = \fA_j^S(\sigma_R^*(C)_T) \\ 
    =& r_{n-t} \cdots r_j \underset{P \in (S-T)(n-j-t)}{\sum} \kappa_j(\sigma_R^*(C)_{T+P}) \\
    =& r_{n-t} \cdots r_j \underset{P \in (S-T)(n-j-t)}{\sum} 
	\kappa_j(\sigma_{R-P}^*(C)_{T+P}).
\end{align*}
Now $(R-P)+(T+P) = \nbar$, so by part \eqref{mocmarg.2} of Proposition \ref{mocmarg}, we have 
\[
  \fA_j^S(\sigma_R^*(C_T)) = r_{n-t} \cdots r_j \underset{P \in (S-T)(n-j-t)}{\sum}
    \kappa_j(C_{T+P}) = \fA_j^S(C_T).
\]

\bigskip

\eqref{fAmoc.3} \ Note that 
\begin{align*}
  \underset{R \subseteq \nbar - T}{\sum} &\fA_j^S(\sigma_R^*(C_T)) = 
     \underset{R \subseteq \nbar - T}{\sum} r_{n-t} \cdots r_j 
    \underset{P \in (S-T)(n-j-t)}{\sum} \kappa_j((\sigma_R^*(C))_{T+P}) \\
  =& r_{n-t} \cdots r_j \underset{P \in (S-T)(n-j-t)}{\sum} \quad
    \underset{R \subseteq \nbar - T}{\sum} \kappa_j(\sigma_{R-P}^*(C)_{T+P})
\end{align*}
where the last step is justified by part \eqref{prop2-7} of Proposition \ref{proposition2}.  Now fix $P \in (S-T)(n-j-t)$ and set $p = n-j-t = \text{card\,}(P)$.  If we are given a subset $Q$ of $\nbar - (T+P)$ and we wish to find $R$ such that $Q=R-P$ and card\,($R \cap P$)=$k$, then the number of ways we can do this is $\binom{p}{k}$.  Thus we can write 
\begin{gather*}
  \underset{R \subseteq \nbar - T}{\sum} \kappa_j(\sigma_{R-P}^*(C)_{T+P}) = 
    \sum_{k=0}^p \underset{\stackrel{R \subseteq \nbar - T}{\text{card\,}(R \cap P)=k}}{\sum}
     \kappa_j(\sigma_{R-P}^*(C)_{T+P}) \\
  = \sum_{k=0}^p \binom{p}{k} \underset{Q \subseteq \nbar - (T+P)}{\sum} 
    \kappa_j(\sigma_{Q}^*(C)_{T+P}).
\end{gather*}
But by part \eqref{mocmarg.3} of Proposition \ref{mocmarg} we have 
\[
  \underset{Q \subseteq \nbar - (T+P)}{\sum} \kappa_j(\sigma_{Q}^*(C)_{T+P}) = 0,
\]
so we are done.

\bigskip

\eqref{fAmoc.4} \ We see that 
\begin{align*}
  r_{n-t} \fA_{j-1}^S(&C_{T+\{i\}}) = 
    r_{n-t} \cdots r_{j-1} \underset{R \in (S-(T+\{i\}))(n-j-t)}{\sum} 
    \kappa_{j-1}(C_{T+\{i\}+R}) \\
  =& \quad r_{n-t} \cdots r_j \underset{R \in (S-T)(n-j-t)}{\sum} 
    [\kappa_j(C_{T+R}) + \kappa_j(\sigma_i^*(C)_{T+R})] \\
  =& \quad \fA_j^S(C_T) + \fA_j^S(\sigma_i^*(C)_T).
\end{align*}
Thus the result is established.
\end{proof}

\subsection{The $\fB^{S}_{j}(C_T)$ notation}
It is convenient to introduce a notation $\fB^{S}_{j}(C_T)$ which is a simple variation of $\fA^{S}_{j}(C_T)$:
\begin{equation} \label{frakB}
	\fB^{S}_{j}(C_T) = \underset{R \in (S-T)(n-j-t)}{\sum} \kappa_{j}(C_{R+T})
\end{equation}
where $C$ is an $n$-copula and $t = \text{card}(T)$.  Thus $\fA^{S}_{j}(C_T) = r_{n-t}\cdots r_j \fB^{S}_{j}(C_T)$.

Not surprisingly, one can prove propositions for $\fB^{S}_{j}(C_T)$ that are almost identical to Propostions \ref{basicfA} and \ref{fAmoc}.  For Proposition \ref{basicfA}, the corresponding statements are obtained by replacing $\fA$ by $\fB$ and eliminating all $r_k$'s.  Thus, for example, the equation 
\[
		r_{n-t} r_{n-t-1} \cdots r_{n-t-r+1} \underset{R \in S(r)}{\sum}
		\fA_{j}^S(C_{T+R}) \quad = \quad \binom{n-j-t}{r} \, \fA_j^S(C_T) 
\]
in Proposition \ref{basicfA}\eqref{basicfA.6} becomes 
\[
	\underset{R \in S(r)}{\sum}\fB_{j}^S(C_{T+R}) \quad = 
        \quad \binom{n-j-t}{r} \, \fB_j^S(C_T).
\]
For Proposition \ref{fAmoc}, the corresponding statements are obtained by replacing $\fA$ by $\fB$ except for part \eqref{fAmoc.4} where the equation 
\[
    r_{n-t}\fA_{j-1}^S(C_{T+\{i\}}) = \fA_j^S(C_T) + \fA_j^S(\sigma_i^*(C_T))
\]
is replaced by 
\[
    r_{j-1}\fB_{j-1}^S(C_{T+\{i\}}) = \fB_j^S(C_T) + \fB_j^S(\sigma_i^*(C_T)).
\]

One might expect these results for $\fB^{S}_{j}(C_T)$ to be proved by simply dividing $\fA^{S}_{j}(C_T)$ by $r_{n-t}\cdots r_j$, however one does not know that all the $r_k$'s are nonzero.  But a quick reading of the proofs for the different parts of Propositions \ref{basicfA} and \ref{fAmoc} shows that they can, essentially, simply be duplicated for $\fB^{S}_{j}(C_T)$.

We shall feel free to use these results and shall refer to them by phrases such as, ``the property of $\fB^{S}_{j}(C_T)$ corresponding to Proposition \ref{fAmoc} $(k)$.''

\section{Properties of a measure of concordance}

\subsection{Previous results}
We recall some results from \cite{taylor04a}.

\begin{theorem}
  For every measure of concordance $\kappa = (\{\kappa_n\},\{r_n\})$, 
  the following is true:
  \begin{enumerate}[\upshape (1)]
    \item    If $C$ and $E$ are $(n-1)$- and $n$-copulas respectively 
      (where $n{\geq}3$) such that 
      \[
        E(x_1,\cdots,x_{i-1},1,x_{i+1},\cdots,x_{n}) = 
	  C(x_1,\cdots,,x_{i-1},x_{i+1},\cdots,x_{n}),
      \]
      then $r_{n-1}\kappa_{n-1}(C) = \kappa_n(E)+\kappa_n(\sigma_i^*(E))$.
    \item    $r_{n-1} = 1+\kappa_n(\rho^*(M))$ whenever $|\rho|=1$ or 
      $n-1$ and $n{\geq}3$.
    \item   $r_2=\frac{2}{3}$ and $\kappa_3(\rho^*(M))=-\frac{1}{3}$ 
      whenever $|\rho|=1$ or $2$.
  \end{enumerate}
\end{theorem}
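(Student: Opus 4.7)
The three parts cascade, and the only ingredients needed are the Transition Property (A7), Permutation Invariance (A4), Duality (A5), Normalization (A1), and the Reflection Symmetry Property (A6), together with the basic identities already established in Section 2.

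For part (1), I choose a permutation $\tau$ of $I^n$ with $\tau^{\prime -1}(1)=i$, i.e.\ $\tau\spry(i)=1$. With this choice, $\tau^*(E)(1,y_2,\dots,y_n)=E(\dots,1,\dots)$ with the $1$ sitting in the $i$-th slot, which by hypothesis agrees up to a permutation of its remaining arguments with $C$. Applying the TP axiom to the $n$-copula $\tau^*(E)$ and invoking Permutation Invariance on the $(n-1)$-dimensional side gives
\[
  r_{n-1}\kappa_{n-1}(C)=\kappa_n(\tau^*(E))+\kappa_n(\sigma_1^*(\tau^*(E))).
\]
A second application of Permutation Invariance turns the first right-hand term into $\kappa_n(E)$. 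For the reflection term, I use display~(\ref{permref}) in the form $\tau\circ\sigma_1=\sigma_{\tau^{\prime -1}(1)}\circ\tau=\sigma_i\circ\tau$ together with Proposition~\ref{proposition2}\eqref{prop2-1} to obtain $\sigma_1^*(\tau^*(E))=\tau^*(\sigma_i^*(E))$, and then Permutation Invariance once more to finish.

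For part (2), I specialize part (1) to $E=M$. Since the $n$-dimensional $M$ with one coordinate set to $1$ is the $(n-1)$-dimensional $M$, Normalization gives $\kappa_{n-1}(M)=\kappa_n(M)=1$, so part (1) becomes $r_{n-1}=1+\kappa_n(\sigma_i^*(M))$, settling the $|\rho|=1$ case. (Because $M$ is invariant under all permutations, Permutation Invariance makes $\kappa_n(\sigma_i^*(M))$ independent of $i$, so any $\rho$ of length $1$ gives the same value.) For $|\rho|=n-1$, observe that since reflections commute and $\sigma_j^2=\mathrm{id}$, $\rho$ has the form $\sigma_j\circ\sigma$ for some $j$; then Proposition~\ref{proposition2}\eqref{prop2-1} and Duality give
\[
  \kappa_n(\rho^*(M))=\kappa_n(\sigma^*(\sigma_j^*(M)))=\kappa_n(\sigma_j^*(M)),
\]
which coincides with the $|\rho|=1$ value $r_{n-1}-1$.

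For part (3), I apply RSP with $n=3$ and $C=M$, sorting the eight reflections of $I^3$ by length. Normalization contributes $\kappa_3(M)=1$ at length $0$, and Duality contributes another $1$ at length $3$; the three length-$1$ reflections each contribute a common value $a$ by the symmetry argument used in part (2), as do the three length-$2$ reflections (common value $b$), and the duality computation in part (2) shows $a=b$. Therefore $1+3a+3a+1=0$, giving $a=b=-\tfrac{1}{3}$, and substituting into part (2) with $n=3$ yields $r_2=1+a=\tfrac{2}{3}$.

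The main obstacle is the index bookkeeping in part (1): one must pin down a single $\tau$ so that, on one side, TP extracts the correct $(n-1)$-marginal $C$ (up to a harmless permutation absorbed by A4), while, on the other side, conjugation carries $\sigma_1$ to exactly $\sigma_i$ rather than some other $\sigma_j$. Once (\ref{permref}) and Proposition~\ref{proposition2} are deployed carefully, parts (2) and (3) are short, essentially algebraic consequences.
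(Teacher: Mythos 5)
Your argument is correct. One caveat about the comparison itself: this paper does not prove the theorem at all --- it is recalled without proof from \cite{taylor04a} in the ``Previous results'' subsection --- so there is no internal proof to measure your proposal against; what you have written is a self-contained derivation from axioms A1--A7, and it checks out. In part (1) the key bookkeeping is handled properly: with $\tau\spry(i)=1$, the restriction of $\tau^*(E)$ to first slot $=1$ is a permuted copy of $C$, absorbed by A4, and $\sigma_1^*(\tau^*(E))=\tau^*(\sigma_i^*(E))$ follows exactly as you say from (\ref{permref}) together with Proposition \ref{proposition2}\eqref{prop2-1}, after which A4 applies since $\sigma_i^*(E)$ is again a copula. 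In part (2), specializing to $E=M$ gives $r_{n-1}=1+\kappa_n(\sigma_i^*(M))$ directly for every $i$ (so your permutation-invariance aside is harmless but unnecessary), and the factorization $\rho=\sigma_j\circ\sigma$ plus Duality correctly settles $|\rho|=n-1$. Part (3) is then the straightforward RSP count $1+3a+3a+1=0$ over the eight reflections of $I^3$, using part (2) to identify the length-one and length-two values, giving $a=-\tfrac13$ and $r_2=\tfrac23$. This is the natural derivation one would expect the cited source to contain.
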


Some $n$-copulas $C$ have the property that for any given $k$, where $1 \leq k \leq n$, all the $k$-marginals of $C$ are equal.  Simple examples of this are the $n$-copulas $M$ and $\Pi$.  

\begin{theorem}
  If $C$ is an $n$-copula with the property that for each $k$ satisfying 
  $1{\leq}k{\leq}n$ all of its $k$-marginals are equal, then 
  for all measures of concordance $\kappa$ we have 
  $\kappa_n(\rho^*(C)) = \kappa_n(\xi^*(C))$ whenever $|\rho|=|\xi|$ or 
  $|\rho|+|\xi|=n$.
\end{theorem}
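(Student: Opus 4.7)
The plan is to establish part (1)---that $\kappa_n(\rho^*(C))$ depends only on $|\rho|$---by induction on $n$, and then to deduce part (2) from part (1) using the Duality axiom. The base case $n=2$ follows from Proposition \ref{mocmarg}\eqref{mocmarg.4} with $R=\emptyset$ and $s=2$ together with the convention $\kappa_1\equiv 0$: this forces $\kappa_2(\sigma_i^*(C))=-\kappa_2(C)$ for $i=1,2$, and lengths $0$ and $2$ are trivial.

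For the inductive step at dimension $n$, let $A$ denote the common proper $(n-1)$-copula of the marginals $C_{\{i\}}$, $i\in\nbar$. By iterated application of Proposition \ref{propcop}\eqref{propcop.3}, the proper copula of any $r$-inactive-set marginal of $A$ coincides with the common proper copula of the $(r+1)$-inactive-set marginals of $C$; hence $A$ itself has the equal-marginals property, and the inductive hypothesis yields a function $f$ satisfying $\kappa_{n-1}(\rho^{\prime *}(A))=f(|\rho'|)$ for every reflection $\rho'$ of $I^{n-1}$. Set $g(S):=\kappa_n(\sigma_S^*(C))$ and apply Proposition \ref{mocmarg}\eqref{mocmarg.4} to the $n$-copula $\sigma_S^*(C)$ with $R=\emptyset$ and any $i\notin S$: using Proposition \ref{proposition2}\eqref{prop2-8} we have $\sigma_S^*(C)_{\{i\}}=\sigma_S^*(C_{\{i\}})$, whose proper $(n-1)$-copula is $\sigma_{\tau^{\prime -1}(S)}^*(A)$ by Proposition \ref{propcop}\eqref{propcop.2}---a reflection of length $|S|$ applied to $A$. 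Since $\sigma_i^*\circ\sigma_S^*=\sigma_{S\cup\{i\}}^*$ (as $\rfg$ is abelian and each $\sigma_i$ is involutive), this yields the recurrence
\[
  g(S)+g(S\cup\{i\}) \;=\; r_{n-1}\,f(|S|), \qquad S\subsetneq\nbar,\ i\notin S.
\]

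A secondary induction on $|T|$ now establishes that $g(T)$ depends only on $|T|$: the case $|T|=0$ is trivial, and for $|T|=k+1$ one picks any $i\in T$ and writes $g(T)=r_{n-1}f(k)-g(T\setminus\{i\})$, whose right-hand side depends only on $k$ by the induction hypothesis. This completes part (1). For part (2), the Duality axiom applied to $\sigma_S^*(C)$, together with the identity $\sigma_S\circ\sigma=\sigma_{\nbar-S}$ holding in the abelian group $\rfg$, gives
\[
  \kappa_n(\sigma_S^*(C))=\kappa_n(\sigma^*(\sigma_S^*(C)))=\kappa_n(\sigma_{\nbar-S}^*(C)).
\]
Given $\rho=\sigma_S$ and $\xi=\sigma_T$ with $|\rho|+|\xi|=n$, we have $|T|=|\nbar-S|$, so part (1) gives $\kappa_n(\xi^*(C))=\kappa_n(\sigma_{\nbar-S}^*(C))$, which equals $\kappa_n(\rho^*(C))$ by the displayed identity.

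The main obstacle is the bookkeeping in the recurrence step: identifying $\sigma_S^*(C)_{\{i\}}$ as a reflection of length exactly $|S|$ applied to $A$, so that the inductive hypothesis collapses it to a quantity depending only on $|S|$---without this clean identification, the subsequent induction on $|T|$ cannot proceed.
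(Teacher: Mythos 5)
This theorem is one the paper simply recalls from \cite{taylor04a} without reproducing a proof, so there is no in-paper argument to compare against; judged on its own, your proof is correct, and it has the attractive feature of running entirely on machinery developed in this paper (Propositions \ref{propcop}, \ref{mocmarg}, and Proposition \ref{proposition2}\eqref{prop2-8}) rather than on whatever argument \cite{taylor04a} used. The induction is sound: the equal-marginals property does pass from $C$ to the common proper copula $A$ by iterating Proposition \ref{propcop}\eqref{propcop.3}; the identification of the proper copula of $\sigma_S^*(C)_{\{i\}}=\sigma_S^*(C_{\{i\}})$ as $\sigma_{\tau^{\prime -1}(S)}^*(A)$, a reflection of length exactly $|S|$ acting on $A$, is precisely Proposition \ref{propcop}\eqref{propcop.2}; and the recurrence $g(S)+g(S\cup\{i\})=r_{n-1}f(|S|)$ together with your secondary induction on $|T|$ does force $g$ to depend only on cardinality, after which Duality handles the complementary-length case. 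Two finishing touches are worth adding. First, the statement concerns arbitrary symmetries $\rho,\xi$, not only reflections: writing $\xi=\sigma_S\tau$ in its canonical form and using $(\sigma_S\circ\tau)^*=\tau^*\circ\sigma_S^*$ together with Permutation Invariance gives $\kappa_n(\xi^*(C))=\kappa_n(\sigma_S^*(C))$ with $|S|=|\xi|$, reducing the general case to the reflection case you treat. Second, your base case invokes Proposition \ref{mocmarg}\eqref{mocmarg.4} with $s=2$, i.e.\ the Transition Property at index $1$ under the convention $\kappa_1=0$; this is legitimate given the paper's remark that the axioms extend to $n=0,1$, but if you prefer not to lean on that remark, the needed identity $\kappa_2(\sigma_i^*(C))=-\kappa_2(C)$ follows directly from the $n=2$ axioms: RSP and Duality give $2\kappa_2(C)+\kappa_2(\sigma_1^*(C))+\kappa_2(\sigma_2^*(C))=0$, and applying this same identity with $\sigma_1^*(C)$ in place of $C$, noting $\sigma_1^*(\sigma_1^*(C))=C$ and $\sigma_2^*(\sigma_1^*(C))=\sigma^*(C)$, yields $2\kappa_2(\sigma_1^*(C))+2\kappa_2(C)=0$.
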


\begin{corollary} \label{Mmoc}
  For all $n{\geq}2$ and all $\rho$ and $\xi$ such that $|\rho|=|\xi|$ 
  or $|\rho|+|\xi|=n$, we have $\kappa_n(\rho^*(M))=\kappa_n(\xi^*(M))$.
\end{corollary}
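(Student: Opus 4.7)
The plan is to derive this corollary as a direct application of the preceding theorem by simply verifying that the copula $M$ satisfies the hypothesis that all of its $k$-marginals coincide for every fixed $k$ with $1 \leq k \leq n$.

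First I would recall that an extended $k$-marginal of an $n$-copula $C$ is, by our conventions, of the form $C_S$ with $\text{card}(S) = n-k$, and the associated proper copula is obtained by evaluating $C$ after setting the coordinates indexed by $S$ equal to $1$. Applied to $M(t_1,\ldots,t_n) = \min(t_1,\ldots,t_n)$, for any $S \subseteq \nbar$ with $\text{card}(S) = n-k$ we have
\[
  M_S(t_1,\ldots,t_n) = \min\bigl(\{t_i : i \notin S\} \cup \{1\}\bigr) = \min\{t_i : i \notin S\},
\]
so the proper $k$-copula associated with $M_S$ is the $k$-dimensional $M$, independent of the choice of $S$. Thus all $k$-marginals of $M$ are equal for every $k$, which is exactly the hypothesis of the preceding theorem.

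Applying that theorem with $C = M$ and invoking its conclusion, we obtain $\kappa_n(\rho^*(M)) = \kappa_n(\xi^*(M))$ whenever $|\rho| = |\xi|$ or $|\rho| + |\xi| = n$, which is precisely the corollary.

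There is really no hard step here; the only thing to be careful about is the distinction between the proper $k$-copula of the extended marginal and the extended marginal itself, but this is handled transparently by our convention that $\kappa_k$ applied to an extended marginal means $\kappa_k$ applied to the proper copula. No further machinery beyond the previous theorem is needed.
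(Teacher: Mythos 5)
Your proposal is correct and matches the paper's intended argument: the paper itself notes that $M$ is a simple example of a copula whose $k$-marginals all coincide for each $k$, and the corollary is obtained exactly as you do, by applying the preceding theorem to $C = M$. Your verification that each extended marginal $M_S$ has the $k$-dimensional $M$ as its proper copula is the only detail needed, and you handle it correctly.
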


\subsection{Extending a bivariate measure of concordance}

Scarsini's axioms for a bivariate measure of concordance $\kappa_2 : \text{Cop}(2) \rightarrow \R$ are equivalent to the following:
\newcounter{scarax}
\begin{list}{\bfseries S\arabic{scarax}.}{\usecounter{scarax}}
  \item  $\kappa_2(M) = 1$ and $\kappa_2(\Pi) = 0$.
  \item  If $C_m \rightarrow C$ uniformly, then 
    $\kappa_2(C_m)\rightarrow \kappa_2(C)$ as $m\rightarrow\infty$.
  \item  $\kappa_2(\tau^*(C)) = \kappa(C)$ whenever $\tau$ is a permutation of $I^2$.
  \item \label{monotone}  If $A \prec_{\mathcal{C}} B$, then $\kappa_2(A) \leq \kappa_2(B)$.
  \item  \label{signchange}  $\kappa(\sigma_1^*(C)) = \kappa(\sigma_2^*(C)) = -\kappa(C)$.
\end{list}
We will show that a bivariate measure of concordance can always be extended to a multivariate measure of concordance in our sense of the term.

\begin{theorem} \label{kextend}
Let $\kappa_2$ be a bivariate measure of concordance in the sense of Scarsini.  Define $\kappa_{2+p}:\text{Cop}(2+p) \rightarrow \R$ by 
\[
  \kappa_{2+p}(C) = \frac{1}{\binom{2+p}{2}} \, \fB_2(C) \quad \text{for } p \geq 1.
\]
Then $\kappa = (\{\kappa_n\},\{r_n\})_{n=2}^{\infty}$ is a multivariate measure of concordance with $r_{1+p} = 2p/(2+p)$.
\end{theorem}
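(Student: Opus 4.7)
The plan is to verify Axioms A1 through A7 directly from the definition $\kappa_{2+p}(C) = \fB_2(C)/\binom{2+p}{2}$, which is a normalized sum $\sum_{R \in \nbar(n-2)} \kappa_2(C_R)$ over the $\binom{n}{2}$ bivariate marginals of $C$. On each summand the proper 2-copula of $C_R$ is a bona fide 2-copula on which Scarsini's axioms S1--S5 apply, so the strategy is to establish each multivariate axiom marginal-by-marginal and then collect. The value of $r_n$ is not chosen in advance; it will be forced by Axiom A7.

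Axioms A1--A4 fall out quickly. For A1, every 2-marginal of $M$ (resp.\ $\Pi$) has proper 2-copula $M$ in dimension 2 (resp.\ $\Pi$), so $\fB_2(M) = \binom{n}{2}$ and $\fB_2(\Pi) = 0$, yielding $\kappa_n(M) = 1$ and $\kappa_n(\Pi) = 0$. For A2, Proposition \ref{margmono} propagates $A \prec_{\mathcal{C}} B$ to $A_R \prec_{\mathcal{C}} B_R$ for every $R$, and S4 applied to proper 2-copulas gives $\kappa_2(A_R) \leq \kappa_2(B_R)$ termwise. For A3, uniform convergence passes to each marginal and to its proper 2-copula, so S2 yields termwise convergence under the finite sum. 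For A4, Proposition \ref{mocmarg}\eqref{mocmarg.1} (whose proof at the 2-marginal level only invokes permutation invariance of $\kappa_2$, i.e.\ S3) gives $\kappa_2(\tau^*(C)_R) = \kappa_2(C_{\tau^{\prime -1}(R)})$, so the relabeling $R \mapsto \tau^{\prime -1}(R)$ leaves $\fB_2$ unchanged.

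Axioms A5 and A6 invoke the sign-change axiom S5. For A5, combining parts \eqref{prop2-7} and \eqref{prop2-8} of Proposition \ref{proposition2} gives
\[
  \sigma^*(C)_R = (\sigma_{\nbar-R}^*(C))_R = \sigma_{\nbar-R}^*(C_R),
\]
so the proper 2-copula of $\sigma^*(C)_R$ arises from that of $C_R$ by reflecting both of its active variables, and two applications of S5 cancel to give $\kappa_2(\sigma^*(C)_R) = \kappa_2(C_R)$ termwise. For A6, I interchange the order of summation to $\sum_R \sum_{\rho \in \rfg} \kappa_2(\rho^*(C)_R)$; writing $\rho = \sigma_S$ and applying parts \eqref{prop2-7}--\eqref{prop2-8} of Proposition \ref{proposition2}, each $\rho$ contributes $\sigma_{S\cap(\nbar-R)}^*(C_R)$, so for fixed $R$ the inner sum collapses to $2^{n-2}$ copies of the bivariate four-term reflection sum $\sum_{S' \subseteq \{i,j\}} \kappa_2(\sigma_{S'}^*(A))$, where $\{i,j\} = \nbar - R$ and $A$ is the proper 2-copula of $C_R$. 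By S5 this sum equals $(1-1-1+1)\kappa_2(A) = 0$.

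The Transition Property A7 is the decisive computation and pins down $r_n$. Expanding
\[
  \fB_2(E) + \fB_2(\sigma_1^*(E)) = \sum_{R \in \overline{n+1}(n-1)} \bigl[\kappa_2(E_R) + \kappa_2(\sigma_1^*(E)_R)\bigr],
\]
I split on whether $1 \in R$. If $1 \in R$, Proposition \ref{proposition2}\eqref{prop2-7} collapses $\sigma_1^*(E)_R = E_R$, doubling the term; if $1 \notin R$, Proposition \ref{proposition2}\eqref{prop2-8} gives $\sigma_1^*(E)_R = \sigma_1^*(E_R)$, whose proper 2-copula arises from that of $E_R$ by reflecting its first active variable, and S5 cancels the pair to zero. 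The surviving $R$'s have the form $\{1\}\cup R'$ with $R' \subseteq \{2,\ldots,n+1\}$ of size $n-2$, and the hypothesis $E(1,x_1,\ldots,x_n) = C(x_1,\ldots,x_n)$ identifies the proper 2-copula of $E_R$ with that of $C_{R''}$, where $R''$ is $R'$ shifted down by one; the map $R \mapsto R''$ is a bijection onto $\nbar(n-2)$. Therefore $\fB_2(E) + \fB_2(\sigma_1^*(E)) = 2\fB_2(C)$, and normalizing yields
\[
  \kappa_{n+1}(E) + \kappa_{n+1}(\sigma_1^*(E))
  = \frac{2\binom{n}{2}}{\binom{n+1}{2}}\kappa_n(C)
  = \frac{2(n-1)}{n+1}\kappa_n(C),
\]
forcing $r_n = 2(n-1)/(n+1)$, i.e.\ $r_{1+p} = 2p/(2+p)$. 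The main obstacle will be the bookkeeping in A7: keeping the case split clean, tracking which active variable is reflected after passing to proper copulas, and verifying the index-shift bijection between the surviving 2-marginals of $E$ and those of $C$.
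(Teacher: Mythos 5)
Your proposal is correct and follows essentially the same route as the paper: direct verification of A1--A3, Monotonicity via Proposition \ref{margmono} with S4 applied termwise, A4--A6 by the marginal relabeling and reflection manipulations (which are exactly the content of the $\fB_2$-analogs of Proposition \ref{fAmoc}\eqref{fAmoc.1}--\eqref{fAmoc.3} that the paper cites), and the Transition Property by splitting $\fB_2(E)+\fB_2(\sigma_1^*(E))$ on whether $1 \in R$ using Proposition \ref{proposition2}\eqref{prop2-7}--\eqref{prop2-8} and S5, which forces $r_{1+p}=2p/(2+p)$ just as in the paper. The only difference is that you unwind the cited propositions into explicit bivariate computations (a reasonable precaution, since those propositions are stated for an already-existing multivariate measure), but the substance of the argument is identical.
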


\begin{proof}
  The Normalization axiom can be seen as follows:  If $M \in \text{Cop}(2+p)$, then 
\[
  \kappa_{2+p}(M) = \frac{1}{\binom{2+p}{2}} \underset{T \in \overline{2+p}(p)}{\sum} 
    \kappa_2(M_T) = \frac{1}{\binom{2+p}{2}} \, \binom{2+p}{2} = 1.
\]
The verification that $\kappa_{2+p}(\Pi) = 0$ is similar.

To check the Continuity axiom, let us suppose that $\{C_k\}$ is a sequence of $(2+p)$-copulas which converge uniformly to $C$.  For every $T \in \overline{2+p}(p)$ we see that $(C_k)_T \rightarrow C_T$ uniformly, and hence, since $\kappa_2$ is a bivariate measure of concordance, we have $\kappa_2((C_k)_T) \rightarrow \kappa_2(C_T)$ as $k \rightarrow \infty$.  It follows that $\kappa_{2+p}(C_k) \rightarrow \kappa_{2+p}(C)$.

The Monotonicity axiom follows from Proposition \ref{margmono}.

The Permutation Invariance, Duality, and RSP axioms all follow from the analogs to Propositions \ref{fAmoc} \eqref{fAmoc.1}, \eqref{fAmoc.2}, and \eqref{fAmoc.3} for $\fB_2(C_T)$.

We wish finally to establish the TP axiom.  We recall that $r_{1+p} = 2p/(2+p)$ and notice that by \textbf{S5} we have $\kappa_2(\sigma_1^*(C_R)) = -\kappa_2(C_R)$ whenever $C_R$ is an extended 2-copula with $1 \notin R$.  Combining this last observation with Propositions \ref{proposition2} \eqref{prop2-7} and \eqref{prop2-8}, we see that 
\[
  \kappa_2(\sigma_1^*(C)_R) = 
    \begin{cases}
      &\kappa_2(C_R) \text{ if } 1 \in R \\
      -&\kappa_2(C_R) \text{ if } 1 \notin R.
    \end{cases}
\]
We then compute for a $(2+p)$-copula $C$,
\begin{align*}
  \kappa_{2+p}(C) &+ \kappa_{2+p}(\sigma_1^*(C)) = \frac{1}{\binom{2+p}{2}} \, \left[
    \underset{R \in \overline{2+p}(p)}{\sum} \kappa_2(C_R) + 
    \underset{R \in \overline{2+p}(p)}{\sum} \kappa_2(\sigma_1^*(C)_R) \right] \\
  =& \frac{1}{\binom{2+p}{2}} \left[ \underset{R \in \overline{2+p}(p)}{\sum} \kappa_2(C_R) + 
    \underset{\underset{1 \in R}{R \in \overline{2+p}(p)}}{\sum} \kappa_2(C_R) - 
    \underset{\underset{1 \notin R}{R \in \overline{2+p}(p)}}{\sum} \kappa_2(C_R) \right] \\
  =& \frac{2}{\binom{2+p}{2}} \, 
    \underset{\underset{1 \in R}{R \in \overline{2+p}(p)}}{\sum} \kappa_2(C_R) \\
  =& r_{1+p} \, \frac{1}{\binom{1+p}{2}} 
    \underset{T \in \overline{1+p}(p-1)}{\sum} \kappa_2(C_{\{1\}+T}) \\
  =& r_{1+p} \kappa_{1+p}(C_1).
\end{align*}
Thus the result is established.
\end{proof}

\begin{note}
Such extensions are not unique.  For example, Nelsen's extensions of Spearman's rho and Kendall's tau in \cite{Nelsen02} differ from the construction used here.
\end{note}

\subsection{A reflection-reduction formula}

We show that $\kappa_n((\sigma_{i_1}\cdots\sigma_{i_s})^*(C))$ can be written in terms
of the measures of concordance of the marginals of $C$.

\begin{theorem} \label{refreduce}
   Suppose that $C$ is an $n$-copula, that $S$ and $T$ are disjoint subsets of $\nbar$, and
  that $s$=card\,($S$) and $t$=card\,($T$).  Then 
  \begin{equation} \label{refreduce.1}
    r_{n-t} \kappa_{n-t}(\sigma_S^*(C_T)) = \sum_{j=n-t-s}^{n-t}(-1)^{n-t-s+j}
    r_{n-t}r_{n-t-1}{\cdots}r_j \sum_{P{\in}S(n-t-j)} \kappa_j(C_{T+P}),
  \end{equation}
  or, equivalently, 
  \begin{equation} \label{refreduce.2}
    \fA_{n-t}(\sigma_S^*(C_T)) = \sum_{j=n-t-s}^{n-t}(-1)^{n-t-s+j} \; \fA_j^S(C_T).
  \end{equation}  
\end{theorem}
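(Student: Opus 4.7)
The plan is to prove the equivalent form \eqref{refreduce.2} by induction on $s = \text{card}(S)$; the first form \eqref{refreduce.1} then follows by unfolding the definition \eqref{frakA} of $\fA_{j}^{S}$ and using $(S-T)(n-j-t) = S(n-j-t)$ (which holds because $S \cap T = \emptyset$).

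For the base case $s=0$, $\sigma_{\emptyset}^{*}$ is the identity, the right-hand sum collapses to the single $j = n-t$ term, and Proposition \ref{basicfA}\eqref{basicfA.1} gives $\fA_{n-t}^{\emptyset}(C_T) = r_{n-t}\kappa_{n-t}(C_T) = \fA_{n-t}(\sigma_{\emptyset}^{*}(C_T))$.

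For the inductive step, assume the result for every subset of $\nbar$ of cardinality $s-1$ and every allowable $T$. Pick $i \in S$ and set $S' = S - \{i\}$. Since $\rfg$ is abelian, $\sigma_{S}^{*} = \sigma_{i}^{*}\circ\sigma_{S'}^{*}$; and since $\{i\}$ and $S'$ are each disjoint from $T$, Proposition \ref{proposition2}\eqref{prop2-8} lets us commute reflections past the subscripts, yielding $\sigma_{S}^{*}(C_T) = \sigma_{i}^{*}((\sigma_{S'}^{*}(C))_T)$. Apply Proposition \ref{mocmarg}\eqref{mocmarg.4} to the $n$-copula $D = \sigma_{S'}^{*}(C)$ with $T$ playing the role of $R$ and $i \in \nbar - T$; multiplying through by $r_{n-t}$ and rewriting in the $\fA$ notation via Proposition \ref{basicfA}\eqref{basicfA.1} yields the key recursion
\[
\fA_{n-t}(\sigma_{S}^{*}(C_T)) = r_{n-t}\,\fA_{n-t-1}(\sigma_{S'}^{*}(C_{T+\{i\}})) - \fA_{n-t}(\sigma_{S'}^{*}(C_T)).
\]
Invoke the inductive hypothesis on each of the two right-hand terms (both valid since $S'$ is disjoint from $T$ and from $T+\{i\}$), and combine the two resulting $\fA^{S'}$-sums via the splitting identity
\[
\fA_{j}^{S}(C_T) = \fA_{j}^{S'}(C_T) + r_{n-t}\,\fA_{j}^{S'}(C_{T+\{i\}})
\]
of Proposition \ref{basicfA}\eqref{basicfA.4}. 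The endpoint terms that appear to lengthen the summation range vanish automatically: $\fA_{n-t-s}^{S'}(C_T) = 0$ and $\fA_{n-t}^{S'}(C_{T+\{i\}}) = 0$, since $|S'| = s-1$ forces the indexing set in \eqref{frakA} to be empty in each case (an instance of Proposition \ref{basicfA}\eqref{basicfA.2}).

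The only real difficulty is bookkeeping: one must verify that the two shifted summation ranges produced by the inductive hypothesis align with the single range in the target identity, and that the sign $(-1)^{n-t-s+j}$ telescopes correctly under the shift $s \mapsto s-1$ (a single parity flip that is absorbed by the minus sign in the displayed recursion). No new ingredient beyond Propositions \ref{basicfA}\eqref{basicfA.4} and \ref{mocmarg}\eqref{mocmarg.4} is required.
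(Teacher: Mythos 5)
Your proof is correct and follows essentially the paper's own route: an induction on card$(S)$ that peels off one elementary reflection using the Transition Property for marginals (your Proposition \ref{mocmarg}\eqref{mocmarg.4} step is the $\kappa$-level form of the paper's Proposition \ref{fAmoc}\eqref{fAmoc.4}) and then recombines the two sums via the splitting identity of Proposition \ref{basicfA}\eqref{basicfA.4}, with the endpoints disposed of by Proposition \ref{basicfA}\eqref{basicfA.1} and \eqref{basicfA.2}; the paper merely organizes the algebra differently, applying the inductive hypothesis once to $\sigma_i^*(C_T)$ and telescoping, where you apply it twice (with $T$ and with $T+\{i\}$) and merge. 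One cosmetic point: the vanishing of $\fA_{n-t}^{S'}(C_{T+\{i\}})$ follows from the empty-indexing-set convention in \eqref{frakA} (the count $n-j-t$ equals $-1$ there) rather than literally from Proposition \ref{basicfA}\eqref{basicfA.2}, and at the top endpoint $j=n-t$ it is cleaner to cite \eqref{basicfA.1} (which gives $\fA_{n-t}^{S'}(C_T)=\fA_{n-t}^{S}(C_T)$ directly) since \eqref{basicfA.4} is stated only for $j\le n-t-1$---neither issue affects correctness.
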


We write out some examples of this formula using the
$(\sigma_{i_1}\cdots\sigma_{i_s})^*(C)$ and $C_{j_1{\cdots}j_k}$
notation.  This sort of notation may well seem more natural and easier
to understand when carrying out low-dimensional calculations.  On the
other hand, the $\sigma_S^*(C)$ and $\fA_{j}^S(C_T)$ notations can be more useful for such
tasks as expressing the general formula (\ref{refreduce.1}) concisely 
and correctly.

\begin{example}
  Suppose that $C$ is an $n$-copula and
  $S=\{i_1,\cdots,i_s\}{\subseteq}\nbar$. 

  The $s=1$ case:
  \[
    r_n \kappa_n(\sigma_{i_1}^*(C)) = 
      r_n r_{n-1} \kappa_{n-1}(C_{i_1}) - r_n \kappa_n(C).
  \]
  This is simply the Transposition Property trivially rearranged.

  The $s=2$ case:
  \begin{align*}
    r_n \kappa_n((\sigma_{i_1} \sigma_{i_2})^*(C))& = 
      r_n r_{n-1} r_{n-2} \kappa_{n-2}(C_{i_1 i_2}) \\
    -& r_n r_{n-1} \left[ \kappa_{n-1}(C_{i_1}) + \kappa_{n-1}(C_{i_2})
      \right] + r_n \kappa_n(C).
  \end{align*}

  The $s=3$ case:
  \begin{align*}
    r_n \kappa_n((\sigma_{i_1} \sigma_{i_2} \sigma_{i_3})^*(C)) &= 
      r_n r_{n-1} r_{n-2} r_{n-3} \kappa_{n-3}(C_{i_1 i_2 i_3}) \\
   -& r_n r_{n-1} r_{n-2} \left[ \kappa_{n-2}(C_{i_1 i_2}) +
      \kappa_{n-2}(C_{i_1 i_3}) + \kappa_{n-2}(C_{i_2 i_3})\right] \\
   +& r_n r_{n-1} \left[ \kappa_{n-1}(C_{i_1}) + \kappa_{n-1}(C_{i_2})
      + \kappa_{n-1}(C_{i_3}) \right] \\
   -& r_n \kappa_n(C).
  \end{align*}
\end{example}

\begin{proof}[Proof of Theorem \ref{refreduce}]  
  We proceed by induction with respect to $s$.  

  The result is trivially true for $s=0$.  Suppose it has been established for $s$ and we wish to extend it to $s+1$.  Let $S$ be a subset of $\nbar$ such that $s = \text{card\,}(S)$ and let $i$ be an element of $\nbar$ such that $i \notin S$.  We will establish the formula for $\fA_{n-t}^{\{i\}+S}(C_T)$.

  We first notice that we can combine Proposition \ref{basicfA}\,\eqref{basicfA.4} and Proposition \ref{fAmoc}\,\eqref{fAmoc.4} to obtain 
\begin{equation} \label{refset}
  \fA_j^S(\sigma_i^*(C_T)) = \fA_{j-1}^{\{i\}+S}(C_T) - \fA_{j-1}^S(C_T) - \fA_j^S(C_T).
\end{equation}
We now make use of our induction hypothesis and (\ref{refset}):  
\begin{align*}
  \fA_{n-t}(\sigma_{\{i\}+S}^*(C_T)) &= \fA_{n-t}(\sigma_S^*(\sigma_i^*(C_T))) \\
    =& \sum_{j=n-t-s}^{n-t}(-1)^{n-t-s-j} \; \fA_j^S(\sigma_i^*(C_T)) \\
    =& \sum_{j=n-t-s}^{n-t}(-1)^{n-t-s-j} [\fA_{j-1}^{\{i\}+S}(C_T) - \fA_{j-1}^S(C_T) - \fA_j^S(C_T)].
\end{align*}
This last sum, when written out, is found to telescope to  
\[
  \left(\sum_{n-t-s-1}^{n-t-1}(-1)^{n-t-s-1+j} \fA_{j}^{\{i\}+S}(C_T)\right) - \fA_{n-t-s-1}^S(C_T) + (-1)^{s+1}\: \fA_{n-t}^S(C_T).
\]
Now $\fA_{n-t-s-1}^S(C_T) = 0$ by Proposition \ref{basicfA}, part \eqref{basicfA.2}, and $ \fA_{n-t}^S(C_T) = r_{n-t} \kappa_{n-t}(C_T) =  \fA_{n-t}^{\{i\}+S}(C_T)$ by Proposition \ref{basicfA}, part \eqref{basicfA.1}; thus the theorem is established.
\end{proof}

\subsection{A complementarity principle}

\begin{theorem} \label{complement}
Let $C$ be an $n$-copula and $R$, $S$, and $T$ disjoint subsets of $\nbar$ such that 
\[
  \nbar = R+S+T, \quad r = \text{card\,}(R), \quad s = \text{card\,}(S), 
	\text{ and } t = \text{card\,}(T).
\]
Then for every measure of concordance $\kappa$ we have 
\begin{equation} \label{complement-eq}
  \sum_{j=s}^{n-t} (-1)^{r+j} \, \fA_j^R(C_T) = \sum_{j=r}^{n-t} (-1)^{s+j} \, \fA_j^S(C_T).
\end{equation}
\end{theorem}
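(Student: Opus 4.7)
The plan is to use Theorem \ref{refreduce} to identify both sides of (\ref{complement-eq}) with $\fA_{n-t}$-values of two different reflections of $C_T$, and then to invoke the Duality axiom to show that those two $\fA$-values agree.

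First I would invoke Theorem \ref{refreduce} twice. Taking the reflecting set to be $R$ (so that ``$s$'' in that theorem becomes $r$, and $n-t-r=s$) yields
\[
  \fA_{n-t}(\sigma_R^*(C_T)) = \sum_{j=s}^{n-t}(-1)^{s+j}\,\fA_j^R(C_T),
\]
while taking the reflecting set to be $S$ yields
\[
  \fA_{n-t}(\sigma_S^*(C_T)) = \sum_{j=r}^{n-t}(-1)^{r+j}\,\fA_j^S(C_T).
\]
Multiplying both equalities by $(-1)^{r+s}$ rewrites their right-hand sides as exactly the left and right sides of (\ref{complement-eq}). Thus, using Proposition \ref{basicfA}\eqref{basicfA.1}, the theorem collapses to the single identity
\[
  \kappa_{n-t}(\sigma_R^*(C_T)) = \kappa_{n-t}(\sigma_S^*(C_T)).
\]

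Next I would pass to the proper $(n-t)$-copula $A$ of $C_T$. If $\tau$ is a proper permutation for $C_T$, then by Proposition \ref{propcop}\eqref{propcop.2} the proper copulas associated with $\sigma_R^*(C_T)$ and $\sigma_S^*(C_T)$ are $\sigma_P^*(A)$ and $\sigma_Q^*(A)$, where $P := \tau^{\prime -1}(R)$ and $Q := \tau^{\prime -1}(S)$. Since $\tau^{\prime -1}$ sends the active set $R+S$ of $C_T$ bijectively onto $\overline{n-t}$, the sets $P$ and $Q$ partition $\overline{n-t}$, and hence $\sigma_P\,\sigma_Q = \sigma^{n-t}$.

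Finally I would close the proof with the Duality axiom. Since reflections are involutions, Proposition \ref{proposition2}\eqref{prop2-1} gives
\[
  (\sigma^{n-t})^*(\sigma_P^*(A)) \;=\; (\sigma_P\circ\sigma^{n-t})^*(A) \;=\; \sigma_Q^*(A).
\]
Applying Duality to the $(n-t)$-copula $\sigma_P^*(A)$ then yields
\[
  \kappa_{n-t}(\sigma_P^*(A)) \;=\; \kappa_{n-t}\!\bigl((\sigma^{n-t})^*(\sigma_P^*(A))\bigr) \;=\; \kappa_{n-t}(\sigma_Q^*(A)),
\]
which is precisely the reduced equality obtained in the first step. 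The only mildly delicate point in the argument is the parity bookkeeping needed to match Theorem \ref{refreduce}'s output against the summation ranges and signs in (\ref{complement-eq}); once that identification is made, the remainder is a direct transcription of Propositions \ref{propcop}\eqref{propcop.2}, \ref{proposition2}\eqref{prop2-1}, and the Duality axiom.
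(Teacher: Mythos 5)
Your proof is correct and takes essentially the same route as the paper's: both apply Theorem \ref{refreduce} to $\sigma_R^*(C_T)$ and $\sigma_S^*(C_T)$ and, after the $(-1)^{r+s}$ sign adjustment, reduce the identity to $\kappa_{n-t}(\sigma_R^*(C_T)) = \kappa_{n-t}(\sigma_S^*(C_T))$. The only difference is that the paper gets this last equality by citing Proposition \ref{mocmarg}\eqref{mocmarg.2} directly, whereas you re-derive it from Proposition \ref{propcop}\eqref{propcop.2} and the Duality axiom, which is exactly how that proposition is proved in the paper.
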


\begin{proof}
We know by Proposition \ref{mocmarg}\eqref{mocmarg.2} that 
\[
  r_{n-t} \kappa_{n-t}(\sigma_R^*(C_T)) = r_{n-t} \kappa_{n-t}((\sigma_R \sigma_{R+S})^*(C_T)) 
    = r_{n-t} \kappa_{n-t}(\sigma_S^*(C_T)).
\]
By Theorem \ref{refreduce} we then have 
\[
  \sum_{j=n-t-r}^{n-t} (-1)^{n-t-r+j} \, \fA_j^R(C_T) = \sum_{j=n-t-s}^{n-t} (-1)^{n-t-s+j} \, \fA_j^S(C_T),
\]
which is the desired result.
\end{proof}

\begin{example}
  Suppose we take $C$ to be a 4-copula, $R = \{1,2\}$, $S = \{3,4\}$, and $T = \emptyset$.  Then (\ref{complement-eq}) is 
\[
  \sum_{j=2}^{4} (-1)^{j} \, \fA_j^R(C) = \sum_{j=2}^{4} (-1)^{j} \, \fA_j^S(C),
\]
which, since $\fA_4^R(C) = \fA_4^S(C)$ by Proposition \ref{basicfA}\eqref{basicfA.1}, reduces to 
\[
  \sum_{j=2}^{3} (-1)^{j} \, \fA_j^R(C) = \sum_{j=2}^{3} (-1)^{j} \, \fA_j^S(C).
\]
Recalling that $r_2 = 2/3$ and making the (very reasonable) assumption that $r_3, r_4 \ne 0$, this further reduces to 
\[
  \kappa_3(C_1) + \kappa_3(C_2) - \frac{2}{3} \, \kappa_2(C_{12}) = 
    \kappa_3(C_3) + \kappa_3(C_4) - \frac{2}{3} \, \kappa_2(C_{34}).
\]
\end{example}

\begin{note}
The existence of relations such as the one in this example suggest that it may be possible to use Theorem \ref{complement} to investigate the compatibility of marginals for a copula, a question which is certainly of interest.  We do not pursue that line of thought here.
\end{note}

\subsection{\'{U}beda identities}

It was brought to our attention by M. \'{U}beda Flores that if $C$ is a 3-copula, then 
\[
  \kappa_3(C) = \frac{1}{3} \, (\kappa_2(C_1) + \kappa_2(C_2) + \kappa_2(C_3)).
\]
This turns out to be the first in an infinite list of identities in which the measure of concordance of an odd-dimensional copula can always be expressed in terms of the measures of concordance of its even-dimensional marginals.  In general these identities involve the constants $\{r_n\}$; we have a $1/3$ in the identity for the 3-copula because the fact that $r_2 = 2/3$.

\begin{theorem} \label{ubeda}
  Let $\{\gamma_k\}$ be the sequence of constants defined by 
\begin{align*}
  &\gamma_1 = \frac{1}{2}, \\
  &\gamma_{p+1} = \frac{1}{2} \, \left( 1 - \sum_{k=1}^p \gamma_k \binom{2p+1}{2k-1} \right), 
    \text{ for } p \geq 1.
\end{align*}
If $C$ is an $n$-copula, $R \subseteq \nbar$ with $r = \text{card}(R)$, and $n-r \geq 2p+1$, then for $p \geq 1$ we have 
\begin{equation} \label{ubident}
  \fA_{2p+1}(C_R) = \sum_{j=1}^p \gamma_{p-j+1} \binom{n-r-2j}{2p-2j+1} \fA_{2j}(C_R).
\end{equation}
\end{theorem}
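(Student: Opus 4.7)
The plan is to combine a ``duality identity'' relating the various $\fA_j(C_R)$ with an induction on $p$. The overall structure has three stages: (i) derive the duality identity from Theorem \ref{refreduce} and the Duality axiom; (ii) prove the theorem in the ``square'' case $n-r=2p+1$ by substituting the induction hypothesis into the duality identity and recognizing the recursion defining $\{\gamma_k\}$ in the resulting coefficients; and (iii) extend from the square case to arbitrary $n-r\geq 2p+1$ by applying the square case to the proper $(2p+1)$-copulas of the $(2p+1)$-marginals of $C_R$.

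For stage (i), I would apply Theorem \ref{refreduce} with $T=R$ and $S=\nbar-R$, for which $s+t=n$ so the summation index runs from $0$ to $m:=n-r$. By Proposition \ref{propcop}\eqref{propcop.2} the proper copula of $\sigma_{\nbar-R}^*(C_R)$ equals $\sigma^*(A)$, where $A$ is the proper copula of $C_R$; the Duality axiom then gives $\kappa_m(\sigma_{\nbar-R}^*(C_R))=\kappa_m(C_R)$, so Theorem \ref{refreduce} yields
\[
  \fA_m(C_R)=\sum_{j=0}^{m}(-1)^j\,\fA_j(C_R).
\]
Because $\kappa_0=\kappa_1=0$ force $\fA_0(C_R)=\fA_1(C_R)=0$, for $m$ odd this rearranges to the key identity $2\fA_m(C_R)=\sum_{j=2}^{m-1}(-1)^j\fA_j(C_R)$.

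Now fix $p\geq 1$ and assume the theorem for all $p'<p$. For stage (ii), specialize to a $(2p+1)$-copula $A$ (so $r=0$, $m=2p+1$). The identity gives $2\fA_{2p+1}(A)=\sum_{k=1}^{p}\fA_{2k}(A)-\sum_{k=1}^{p-1}\fA_{2k+1}(A)$. Applying the inductive hypothesis to each $\fA_{2k+1}(A)$ (valid since $m=2p+1\geq 2k+1$), substituting, interchanging the order of summation, and setting $l=k-j+1$, the coefficient of $\fA_{2j}(A)$ for $1\leq j<p$ becomes
\[
  1-\sum_{l=1}^{p-j}\gamma_l\binom{2(p-j)+1}{2l-1},
\]
which is precisely $2\gamma_{p-j+1}$ by the defining recursion; the coefficient of $\fA_{2p}(A)$ is $1=2\gamma_1$. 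Dividing by $2$ establishes the square case. For stage (iii), given $C_R$ with $m>2p+1$, unpack $\fA_{2p+1}(C_R)$ from its definition and apply the square case to each proper $(2p+1)$-copula $B_P$ of $C_{R+P}$, for $P\in(\nbar-R)(m-2p-1)$. Iterating Proposition \ref{propcop}\eqref{propcop.3} identifies the proper $2j$-copula of $(B_P)_Q$ with that of $C_{R+P+i}$, where $i\subseteq\nbar-R-P$ is the image of $Q$ under the proper permutation of $B_P$. The double sum $\sum_P\sum_i\kappa_{2j}(C_{R+P+i})$ then collapses to $\binom{m-2j}{2p-2j+1}\sum_{L\in(\nbar-R)(m-2j)}\kappa_{2j}(C_{R+L})$ by a standard counting argument, and the prefactors $r_m\cdots r_{2j}$ re-assemble into $\fA_{2j}(C_R)$.

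The main obstacle will be the bookkeeping in stage (ii): one must interchange the double sum arising from the induction hypothesis and correctly collect the coefficient of each $\fA_{2j}(A)$, then recognize that this coefficient satisfies precisely the defining recursion for $\gamma_{p-j+1}$. Once that algebraic step is handled cleanly, the rest is routine application of the definitions and the basic identities for $\fA_j^S(C_T)$ proved in Propositions \ref{basicfA} and \ref{fAmoc}.
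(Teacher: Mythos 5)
Your proposal is correct and takes essentially the same route as the paper: your ``duality identity'' is exactly the paper's intermediate identity (\ref{weakub}) (obtained from Theorem \ref{refreduce} plus invariance under the full reflection of the active set), and your induction on $p$, the interchange of sums recognizing the defining recursion for $\gamma_{p-j+1}$, and the final counting step (Lemma \ref{countinglemma}\eqref{counting.2}, i.e.\ Proposition \ref{basicfA}\eqref{basicfA.6}) giving $\binom{n-r-2j}{2p-2j+1}$ all mirror the paper's Steps 1--3. The only difference is organizational: you prove the square case $n-r=2p+1$ first and then lift it through the $(2p+1)$-marginals, while the paper performs the same substitution inside the sum over marginals of the ambient $n$-copula (and treats $p=1$ as a separate base step, which your vacuous induction hypothesis subsumes).
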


\begin{proof}
  We may, without loss of generality, assume $R = \emptyset$ and prove (\ref{ubident}) with $r=0$.  The transition to the case involving $C_R$ is then made by noting that (\ref{ubident}) holds for the proper copula of $C_R$.

\bigskip

\textbf{Step 1.}  Suppose $C$ is a $(2p+1)$-copula.  Then by Proposition \ref{fAmoc}\eqref{fAmoc.2} and Theorem \ref{refreduce}, we have 
\[
  \fA_{2p+1}(C) = \fA_{2p+1}(\sigma_{\overline{2p+1}}^*(C)) = 
  \fA_2(C) - \fA_3(C) + \cdots - \fA_{2p+1}(C),
\]
from which it follows that 
\begin{equation} \label{weakub}
   \fA_{2p+1}(C) = \frac{1}{2} \, \left( \sum_{j=1}^p \fA_{2j}(C) - 
    \sum_{j=2}^p \fA_{2j-1}(C) \right).
\end{equation}
Equation (\ref{weakub}) will also hold if $C$ is not a $(2p+1)$-copula but rather an extended $(2p+1)$-copula.  That is, it will hold if $C = E_R$ where $E$ is an $n$-copula with $n > 2p+1$ and $R$ is a subset of $\nbar$ with $\text{card}(R) = n-2p-1$.

\bigskip

\textbf{Step 2.}
We verify (\ref{ubident}) for the case $p=1$.  

Let $C$ be an $n$-copula with $n \geq 3$ and choose $R \in \overline{n}(n-3)$.  Let $T = \overline{n}-R$.  By Proposition \ref{fAmoc}\eqref{fAmoc.2}, we have $\fA_3(\sigma_T^*(C_R)) = \fA_3(C_R)$.  It then follows from Theorem \ref{refreduce} and $\kappa_0 = \kappa_1 = 0$ that
\[
	\fA_3(C_R) = \sum_{j=0}^3 (-1)^j \fA_j(C_R) = \fA_2(C_R) - \fA_3(C_R).
\]
Thus $\fA_3(C_R) = \gamma_1 \fA_2(C_R)$.

We next consider $\fA_3(C)$.  
\begin{align*}
	\fA_3(C) &= r_n \cdots r_3 \, \sum_{R \in \overline{n}(n-3)} \kappa_3(C_R) \\
	=& \, r_n \cdots r_4 \, \sum_{R \in \overline{n}(n-3)} \fA_3(C_R) 
		 \quad \text{($r_n \cdots r_4 = 1$ if $n=3$)} \\
	=& \, \gamma_1 r_n \cdots r_4 \, \sum_{R \in \overline{n}(n-3)} \fA_2(C_R) 
		 \quad \text{(result above)} \\
	=& \,  \gamma_1 r_n \cdots r_4 r_3 r_2 \, \sum_{R \in \overline{n}(n-3)} \sum_{S \in (\overline{n}-R)(1)} \kappa_2(C_{R+S})  \quad \text{(definition of $\fA$)} \\
	=& \,  \gamma_1 r_n \cdots r_2 \, \binom{n-2}{1} \, \sum_{T \in \overline{n}(n-2)} \kappa_2(C_{R+S})  
		\quad \text{(Lemma \ref{countinglemma}\eqref{counting.2})} \\
	=& \, \gamma_1 \, \binom{n-2}{1} \, \fA_2(C).
\end{align*}

We conclude Step 2 by noting that if $C = E_R$ where $E$ is an $n$-copula and $r = \text{card}(R)$, then
\[
	\fA_3(E_R)  = \gamma_1 \, \binom{n-r-2}{1} \, \fA_2(E_R).
\]

\bigskip

\textbf{Step 3.}
Suppose that for a given $p \geq 2$ we have verified equation (\ref{ubident}) for $\fA_{2q+1}(E_R)$ for all $m$-copulas $E$ and $R \subseteq \mbar$ for which $r = \text{card}(R)$, $2q+1 \leq m-r$, and $q \leq p-1$.   We then take $C$ to be an $n$-copula with $2p+1 \leq n$ and calculate thus:
\begin{align*}
  \fA_{2p+1}(C) &= r_n \cdots r_{2p+2} \underset{R \in \nbar(n-2p-1)}{\sum} \fA_{2p+1}(C_R) 
    \quad \text{(Proposition \ref{basicfA}\eqref{basicfA.1})} \\
  =& \, r_n \cdots r_{2p+2} \underset{R \in \nbar(n-2p-1)}{\sum} \frac{1}{2} 
    \left[ \sum_{j=1}^p \fA_{2j}(C_R) - \sum_{k=2}^p \fA_{2k-1}(C_R) \right] 
    \quad \text{(equation (\ref{weakub}))} \\
  =& \, \frac{1}{2} \, r_n \cdots r_{2p+2} \underset{R \in \nbar(n-2p-1)}{\sum}
    \left[ \sum_{j=1}^p \fA_{2j}(C_R) - \sum_{k=2}^p \sum_{j=1}^{k-1} \gamma_{k-j} 
    \binom{2p-2j+1}{2k-2j-1} \fA_{2j}(C_R) \right] \\
  &\quad \quad \text{(induction hypothesis)} \\
  =& \, \frac{1}{2} \, r_n \cdots r_{2p+2} \underset{R \in \nbar(n-2p-1)}{\sum}
    \left[ \sum_{j=1}^p \fA_{2j}(C_R) - \sum_{j=1}^{p-1} \sum_{k=j+1}^{p} \gamma_{k-j} 
    \binom{2p-2j+1}{2k-2j-1} \fA_{2j}(C_R) \right] \\
  =& \, \frac{1}{2} \, r_n \cdots r_{2p+2} \underset{R \in \nbar(n-2p-1)}{\sum}
    \left[ \fA_{2p}(C_R) + \sum_{j=1}^{p-1} \left( 1 - 
    \sum_{i=1}^{p-j} \gamma_i \binom{2(p-j)+1}{2i-1} \right) \fA_{2j}(C_R) \right] \\
  =& \, r_n \cdots r_{2p+2} \underset{R \in \nbar(n-2p-1)}{\sum} 
    \quad \sum_{j=1}^p \gamma_{p-j+1} \fA_{2j}(C_R) \\
  =& \sum_{j=1}^p \gamma_{p-j+1} \binom{n-2j}{n-2p-1} \fA_{2j}(C) 
    \quad \text{(Proposition \ref{basicfA}\eqref{basicfA.6})}.
\end{align*}
Thus the result is established.
\end{proof}

\begin{example}
  One readily calculates that 
\[
  \gamma_1 = \frac{1}{2}, \quad \gamma_2 = - \frac{1}{4}, \quad 
    \gamma_3 = \frac{1}{2}, \quad \gamma_4 = - \frac{17}{8}.
\]
Then from equation (\ref{ubident}) one sees the following:

If $C$ is a 3-copula, then 
\[
  r_3 \kappa_3(C) = \frac{1}{2}\, \fA_2(C).
\]

If $C$ is a 5-copula, then 
\[
  r_5 \kappa_5(C) = - \frac{1}{4}\, \fA_2(C) + \frac{1}{2}\, \fA_4(C).
\]

If $C$ is a 7-copula, then 
\[
  r_7 \kappa_7(C) = \frac{1}{2}\, \fA_2(C) - \frac{1}{4}\, \fA_4(C) + \frac{1}{2}\, \fA_6(C).
\]

If $C$ is a 9-copula, then 
\[
  r_9 \kappa_9(C) = -\frac{17}{8}\, \fA_2(C) + \frac{1}{2}\, \fA_4(C) - \frac{1}{4}\, \fA_6(C) + \frac{1}{2}\, \fA_8(C).
\]

One may ``divide out'' $r_{2p+1}$ in these examples to obtain formulas of the form ``$\kappa_{2p+1}(C)=\cdots$.''  We can do this not because we know $r_{2p+1} \ne 0$ but because one can repeat all the underlying, relevant proofs with $\fA_j(C_R)$ replaced with expressions without the leading $r_n$ and of the form 
$r_{n-1}r_{n-2}\cdots r_j\underset{T \in (\nbar - R)(n-r-j)}{\sum}\kappa_j(C_{R+T})$.  The situation is analogous to that for $\fB_j^S(C_T)$.
\end{example}

\subsection{An asymptotic result}

Suppose that we compute $\kappa_s(C_s)$ for the copula $C_s$ of the random vector $(X_1,\cdots,X_s)$ and then consider a new, enlarged random vector $(X_1,\cdots,X_s,X_{s+1},\cdots,X_{s+p})$ where each new random variable $X_{s+k}$ is a monotone increasing function of, let us say, $X_1$.  If $C_{s+p}$ is the copula of the new random vector, then one is tempted to suspect that $\kappa_{s+p}(C_{s+p}) \rightarrow 1$ as $p \rightarrow \infty$.  However, this is often not the case.

To see this, consider the random vector $(\epsilon_1 X,\epsilon_2 X,\cdots,\epsilon_n X)$, where $X$ is a fixed, continuous random variable, $\epsilon_i = -1$ for exactly $s$ of the $i$'s, and $\epsilon_i = 1$ for all the other $i$'s.  The $n$-copula of this random vector is $\sigma_S^*(M)$ for some $S \subseteq \nbar$ with $s = \text{card}(C)$.  We know by Corollary \ref{Mmoc} that $\kappa_n(\sigma_S^*(M))$ depends only on $n$ and $s$ and not on the particular subset $S$ of $\nbar$ with which we are working.  We can investigate the question raised above by considering the behavior of $\kappa_n(\sigma_S^*(M))$ if $s$ is held fixed and $n \rightarrow \infty$.

\begin{theorem}
  If $s = \text{card}(S) > 0$ where $S \subseteq \nbar$, then 
\begin{equation} \label{Mformula}
  \kappa_n(\sigma_S^*(M)) = (-1)^s + \sum_{k=1}^s (-1)^{k+s}\binom{s}{k} \,  r_{n-1} \cdots r_{n-k}.
\end{equation}
\end{theorem}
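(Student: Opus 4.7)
The plan is to apply the reflection-reduction formula of Theorem \ref{refreduce} directly to $C = M$ with $T = \emptyset$. With those choices, equation \eqref{refreduce.2} becomes
\[
  \fA_n(\sigma_S^*(M)) = \sum_{j=n-s}^{n}(-1)^{n-s+j}\,\fA_j^S(M).
\]
On the left, Proposition \ref{basicfA}\eqref{basicfA.1} gives $\fA_n(\sigma_S^*(M)) = r_n\,\kappa_n(\sigma_S^*(M))$. For the terms on the right, the key observation is that if $P \subseteq S$ with $\mathrm{card}(P) = n-j$, then setting the coordinates indexed by $P$ to $1$ in $M(x_1,\ldots,x_n)=\min(x_1,\ldots,x_n)$ simply drops those arguments, so the proper $j$-copula associated with $M_P$ is again $M$ (in dimension $j$). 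Consequently $\kappa_j(M_P) = \kappa_j(M) = 1$, and
\[
  \fA_j^S(M) = r_n r_{n-1}\cdots r_j \sum_{P \in S(n-j)} \kappa_j(M_P) = r_n r_{n-1}\cdots r_j \binom{s}{n-j}.
\]

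Substituting this back and reindexing by $k = n-j$ (so $k$ runs from $0$ to $s$ and the sign $(-1)^{n-s+j}$ becomes $(-1)^{s+k}$) yields
\[
  r_n\,\kappa_n(\sigma_S^*(M)) \;=\; \sum_{k=0}^{s}(-1)^{s+k}\binom{s}{k}\,r_n r_{n-1}\cdots r_{n-k}.
\]
The $k=0$ term on the right is $(-1)^s r_n$, so after separating it and peeling off a factor of $r_n$ from every other term we obtain
\[
  r_n\,\kappa_n(\sigma_S^*(M)) \;=\; r_n\left[(-1)^s + \sum_{k=1}^{s}(-1)^{s+k}\binom{s}{k}\,r_{n-1}\cdots r_{n-k}\right],
\]
which is the claim once the common factor $r_n$ is removed.

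The only subtlety is that we have no a priori guarantee that $r_n \ne 0$, so dividing by $r_n$ in the last step is not automatically legitimate. This is precisely the situation the author addresses by introducing the $\fB_{j}^S$ notation: repeating the proof of Theorem \ref{refreduce} with the $\fB$-version (which contains no leading $r_{n-t}$) produces the identity without the spurious factor of $r_n$, and the same computation above then establishes the theorem unconditionally. I expect this bookkeeping with the $\fB$-variant to be the only real obstacle; the combinatorial heart of the argument—recognizing that every $n-j$ element marginal of $M$ is again $M$ and counting such subsets of $S$ with $\binom{s}{n-j}$—is immediate.
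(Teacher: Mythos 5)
Your proposal is correct and follows essentially the same route as the paper: you apply Theorem \ref{refreduce} to $M$ with $T=\emptyset$, evaluate $\fA_j^S(M) = \binom{s}{n-j}\,r_n\cdots r_j$ by noting that every marginal of $M$ is again $M$, reindex with $k=n-j$, and remove the leading $r_n$ by rerunning the argument with the variant notation lacking that factor, exactly as the paper does. No substantive differences to report.
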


\begin{proof}
  Notice that for $n-s \leq j \leq n$ we have 
\[
  \fA_j^S(M) = r_n \cdots r_j \underset{R \in S(n-j)}{\sum} \kappa_j(M_R) = 
    \binom{s}{n-j} \, r_n \cdots r_j.
\]
Then from Theorem \ref{refreduce} we obtain 
\begin{gather*}
  r_n \kappa_n(\sigma_S^*(M)) = \sum_{j=n-s}^n (-1)^{n-s+j} \binom{s}{n-j} \, r_n \cdots r_j \\
  = \sum_{k=0}^s (-1)^{k+s} \binom{s}{k} \, r_n \cdots r_{n-k}.
\end{gather*}
We can ``divide out'' $r_n$ as we did when dealing with the $\fB_j^S(C_T)$ notation.  This gives us equation (\ref{Mformula}).
\end{proof}

\bigskip

\begin{corollary}
  Suppose in the theorem above that $r_n \rightarrow r$.  If $s$ is held fixed while $n \rightarrow \infty$, we see that 
\[
  r_n \kappa_n(\sigma_S^*(M)) \rightarrow (-1)^s (1-r)^s.
\]
\end{corollary}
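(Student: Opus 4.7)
The plan is to apply equation (\ref{Mformula}) from the preceding theorem directly and to pass to the limit inside a finite sum. That equation presents $\kappa_n(\sigma_S^*(M))$ as
\[
  (-1)^s + \sum_{k=1}^s (-1)^{k+s}\binom{s}{k}\, r_{n-1}r_{n-2}\cdots r_{n-k},
\]
which is a sum of exactly $s+1$ terms. Since $s$ is held fixed as $n \to \infty$, the number of summands does not depend on $n$, so term-by-term convergence is all that is required.

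For each fixed $k$ with $1 \leq k \leq s$, the product $r_{n-1}r_{n-2}\cdots r_{n-k}$ consists of exactly $k$ factors, and by hypothesis every factor tends to $r$; hence this product tends to $r^k$. Adding the limits and using the identity $(-1)^{k+s}r^k = (-1)^s(-r)^k$ rewrites the limit of $\kappa_n(\sigma_S^*(M))$ as $(-1)^s\sum_{k=0}^s \binom{s}{k}(-r)^k$, which by the binomial theorem equals $(-1)^s(1-r)^s$. Multiplication by the scalar $r_n \to r$ then produces the asserted asymptotic identity.

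There is no serious obstacle here: the argument reduces to the limit of a finite sum of convergent factors followed by a single application of the binomial theorem. The only bookkeeping point worth flagging is the treatment of the $k=0$ case, where $r_{n-1}\cdots r_{n-k}$ should be read as the empty product $1$, which accounts for the standalone $(-1)^s$ in front of the sum in equation (\ref{Mformula}).
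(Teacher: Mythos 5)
Your main computation is exactly the intended argument: the paper offers no separate proof of this corollary because it is meant to follow from equation \eqref{Mformula} by passing to the limit term by term in a sum of a fixed, finite number of terms and then applying the binomial theorem, and you carry this out correctly, including the bookkeeping of the empty product at $k=0$. That part establishes $\kappa_n(\sigma_S^*(M)) \rightarrow (-1)^s(1-r)^s$.

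Your closing sentence, however, is a genuine misstep. Multiplying a sequence whose limit is $(-1)^s(1-r)^s$ by the scalar sequence $r_n \rightarrow r$ gives $r_n\,\kappa_n(\sigma_S^*(M)) \rightarrow r\,(-1)^s(1-r)^s$, not $(-1)^s(1-r)^s$; the two agree only when $r=1$ or when the limit is zero. So what you have actually proved is the assertion with $\kappa_n(\sigma_S^*(M))$ alone on the left (equivalently, with an extra factor of $r$ on the right of the displayed limit). This is also how the paper itself uses the result in the discussion immediately following: for the extension construction, where $r_{1+p} = 2p/(2+p) \rightarrow 2$, it concludes $\kappa_n(\sigma_S^*(M)) \rightarrow 1 = (-1)^s(1-r)^s$, which matches your main computation but not the corollary as literally printed (that reading would force $\kappa_n(\sigma_S^*(M)) \rightarrow 1/2$). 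The leading $r_n$ in the statement should therefore be treated as a slip; rather than asserting that multiplication by $r_n$ ``produces the asserted asymptotic identity,'' you should either drop that step and state the limit for $\kappa_n(\sigma_S^*(M))$ itself, or flag explicitly that the literal statement differs by a factor of $r$.
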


\bigskip

Now for the examples of measures of concordance in \cite{taylor04a} for which it is clear that $r_n \rightarrow r$---for instance, Nelsen's generalizations of Spearman's rho and Kendall's tau---we find that $r=1$.  Thus for those examples, $\kappa_n(\sigma_S^*(M)) \rightarrow 0$ provided $s>0$.  However, if we look at the construction used here to extend a bivariate measure of concordance to a multivariate one, we see that 
\[
  r_{1+p} = \frac{2p}{2+p} \rightarrow 2 \quad \text{as} \quad p \rightarrow \infty,
\]
so that $\kappa_n(\sigma_S^*(M)) \rightarrow 1$ as $n \rightarrow \infty$.

\subsection{Questions}

We briefly consider a few questions or possible lines of investigation that suggest themselves.  We take $\kappa = (\{\kappa_n\},\{r_n\})$ as a measure of concordance.

\begin{enumerate}
  \item  For all examples with which we are familiar, $r_n > 0$ for all $n \geq 2$.  Is this true in general?  We also find for all our examples that there is an $r$ such that $r_n \rightarrow r$.  Again, is this true in general?

  \item  Find the minimum value of $\kappa_n(C)$ where $C \in \text{Cop}(n)$.  Does this always occur at some $C = \sigma_S^*(M)$?  Does the minimum depend only on the $r_n$'s or does the choice of $\kappa$ matter?

  \item  Given an $n$-copula $A$, find an $(n+1)$-copula $B$ such that $B_1 = A$ and $\kappa_{n+1}(B)$ is a maximum (or a minimum).

  \item  Given an $n$-copula $C$, can we specify a minimal, finite set $\{\kappa_m(C_R)\}$ of measures of concordance of marginals of $C$ which would be sufficient to compute $\kappa_p(\sigma_S^*(C_T))$ for all $p$ and all $S,T \subseteq \nbar$?  Given such a set $\{\kappa_m(C_R)\}$, can we find an ``efficient'' algorithm to compute the maximum and the minimum values of $\kappa_p(\sigma_S^*(C_T))$?

  \item  The examples of measures of concordance with which we are familiar have the 
    property that if $A$ and $B$ are $n$-copulas and $0 \leq t \leq 1$, 
    then $\kappa_n((1-t)A+tB)$ is either a first or second degree polynomial 
    in $t$.  One can clearly extend this idea to talk about measures of 
    concordance of degree $m$.  Is it possible to characterize measures 
    of concordance of a fixed degree, to exhibit some sort of canonical 
    form for them?  (This has been done for bivariate measures of concordance of degree one in \cite{Edwards04}, \cite{Edmiktay03a}, and \cite{Edmiktay03b}.)

\end{enumerate}


\begin{thebibliography}{99}

\bibitem{Dolati-Ubeda04}  Dolati, A., and \'{U}beda Flores, M. (2004).  A multivariate population version of Gini's rank associate coefficient.  To appear in \textit{Statistical Papers}.

\bibitem{Dolati-Flores04} Dolati, A., and \'Ubeda-Flores, M. (2006), On measures of multivariate concordance, \textit{Journal of Probability and Statistical Science} 4, 147-163.

\bibitem{Edwards04} Edwards, H. (2004).  Measures of concordance of polynomial type.  Ph.D. dissertation.

\bibitem{Edmiktay03a} Edwards, H., Mikusi\'{n}ski, P., and Taylor, M. D. (2004).
Measures of concordance determined by $D_4$-invariant copulas, \textit{International Journal of Mathematics and Mathematical Sciences} 70, 3867-3875.

\bibitem{Edmiktay03b} Edwards, H., Mikusi\'{n}ski, P., and Taylor, M. D. (2005).
Measures of concordance determined by $D_4$-invariant meaures on $(0,1)^2$, 
\textit{Proc. of A.M.S.} 133, 1505-1513.



\bibitem{Joe90} Joe, H. (1990). Multivariate concordance.  \textit{Journal of Multivariate 
Analysis} \textbf{35}, 12-30.

\bibitem{Joe97} Joe, H. (1997).  \textit{Multivariate Models and Dependence Concepts}.
Boca Raton: Chapman \& Hall/CRC.

\bibitem{Nelsenbook} Nelsen, R. B. (1999). \textit{An Introduction to Copulas}.
New York: Springer-Verlag.

\bibitem{Nelsen02} Nelsen, R. B. (2002). Concordance and copulas: A survey.  In: C. M. 
Cuadras, J. Fortiana, and J. A. Rodriguez-Lallena (eds.): \textit{Distributions with Given Marginals 
and Statistical Modelling}. Dordrecht: Kluwer Academic Publishers, 169-177.

\bibitem{Scarsini84} Scarsini, M. (1984). On measures of concordance, \textit{Stochastica}, 
\textbf{VIII}, 201-218.

\bibitem{schweizsklar83}  Schweizer, B. and  Sklar, A. (1983). \textit{Probabilistic Metric Spaces}. 
New York: North-Holland.

 \bibitem{taylor04a} M. D. Taylor, Multivariate measures of concordance. \emph{Annals of the Institute for Statistical Mathematics} \textbf{59} Number 4 (2007) 789-806.

\bibitem{Ubeda04} \'{U}beda Flores, M. (2004).  Multivariate versions of Blomqvist's beta and Spearman's footrule.  To appear.

\end{thebibliography}
\end{document}